\theoremstyle{plain}
\newtheorem{thm}{Theorem}[section] 
\newtheorem{defn}[thm]{Definition} 
\newtheorem{lem}[thm]{Lemma} 
\newtheorem{pro}[thm]{Proposition}
\theoremstyle{definition}
\newtheorem{rem}[thm]{Remark}
\begin{document}
	\begin{center}
		\section*{On quasi-infinitely divisible random measures}
		\subsection*{Riccardo Passeggeri\footnote{\noindent LPSM, Sorbonne University. Email: riccardo.passeggeri@gmail.com}}
		\today
	\end{center}

 			\begin{abstract}
 			Quasi-infinitely divisible (QID) distributions have been recently introduced by Lindner, Pan and Sato (\textit{Trans.~Amer.~Math.~Soc.}~\textbf{370}, 8483-8520 (2018)). A random variable $X$ is QID if and only if there exist two infinitely divisible (ID) random variables $Y$ and $Z$ s.t.~$X+Y\stackrel{d}{=}Z$ and $Y$ is independent of $X$. In this work, we show that a family of QID completely random measures (CRMs) is dense in the space of all CRMs with respect to convergence in distribution. We further demonstrate that the elements of this family posses a L\'{e}vy-Khintchine formulation and that there exists a one to one correspondence between their law and certain characteristic pairs. We prove the same results also for the class of point processes with independent increments. In the second part of the paper, we show the relevance of these results in the general Bayesian nonparametric framework based on CRMs developed by Broderick, Wilson and Jordan (\textit{Bernoulli}, \textbf{24}, 3181-3221 (2018)). \\\\
 				\textbf{Keywords:} quasi-infinite divisibility, completely random measure, dense class, nonparametric Bayesian analysis, automatic conjugacy.\\\\
 			 \textbf{MSC (2010):} 60E07, 60G57, 60A10, 62F15
 			\end{abstract}

\tableofcontents

\section{Introduction}
A random measure $\xi$ on $S$, with underlying probability space $(\Omega,\mathcal{F},\mathbb{P})$, is a function $\Omega\times\textbf{S}\rightarrow[0,\infty]$, such that $\xi(\omega,B)$ is a $\mathcal{F}$-measurable in $\omega\in\Omega$ for fixed $B$ and a locally finite measure in $B\in\textbf{S}$ for fixed $\omega$. Completely random measures (CRMs) have the additional property that for any disjoint $B_{1}, B_{2}, . . . , B_{k}\in \textbf{S}$, $k\in\mathbb{N}$, the random variables $\xi(B_{1}),\xi(B_{2}),...,\xi(B_{k})$ are
independent. CRMs, also called independently scattered random measures or random measures with independent increments, have a fundamental role in nonparametric Bayesian analysis; as Ghosal and van der Vaart affirm in their recent book (see \cite{Vaart}) CRMs ``arise as priors, or building blocks for priors, in \textit{many} Bayesian nonparametric applications".

Completely random measures have a long history which is inextricably linked with the one of infinitely divisible distributions. In 1967, Kingman \cite{King} proved a very appealing and useful representation theorem for all CRMs. He showed that any CRM $\xi$ is almost surely given by the sum of three components: one deterministic,  one  concentrated  on  a  fixed  set   of  atoms,  and  one  concentrated   on   a   random  set   of   atoms. He further showed that the last component, which he called the ordinary component, is fully determined by a Poisson point process: $\xi_{ord}(B)=\int_{(0,\infty)}x\eta(B\times dx)$, where $\eta$ is a Poisson point process on $S\times(0,\infty)$. The Poisson point process is the prime example of infinitely divisible CRM.

Infinitely divisible (ID) distributions have an even longer history that goes back to the work of L\'{e}vy, Kolmogorov and De Finetti, among others. They constitute one of the most studied classes of probability distributions. One of their most attractive properties is that their characteristic function have an explicit formulation, called the L\'{e}vy-Khintchine formulation, written in terms of three mathematical objects. These are the drift, which is a real valued constant, the Gaussian component, which is a non-negative constant, and the L\'{e}vy measure, which is a measure on $\mathbb{R}$ satisfying an integrability condition and with no mass at $\{0\}$. Gaussian and Poisson distributions are examples of this class.

In 2018, in \cite{LPS} Sato, Lindner and Pan introduced the class of quasi-infinitely divisible (QID) distributions. A QID random variable is defined as follows: a random variable $X$ is QID (namely has a QID distribution) if and only if there exist two ID random variables $Y$ and $Z$ s.t.~$X+Y\stackrel{d}{=}Z$ and $Y$ is independent of $X$. QID distributions are like ID distributions except for the fact that the L\'{e}vy measure is now allowed to take negative values. In other words, a QID distribution has a L\'{e}vy-Khintchine formulation which is uniquely determined by a drift, a Gaussian component and by a `signed measure' (more precisely a real valued set function) called the quasi-L\'{e}vy measure. Any ID distribution is QID, but the converse is not always true.

In \cite{LPS}, the authors show that QID distributions are dense in the space of all probability distributions with respect to weak convergence and that distributions concentrated on the integers (or any shift and dilation of them) are QID if and only if their characteristic functions have no zeros, among other results. Further theoretical results have been achieved in \cite{Berger,Khartov,Pass,Pass-bim}. In \cite{Pass}, the QID framework is extended to real-valued random noises and stochastic processes. QID distributions have already shown to have an impact in various fields: from mathematical physics, see \cite{Physics2} and \cite{Physics1}, to number theory, see \cite{Naka} and \cite{Naka2}, and to insurance mathematics, see \cite{Zhang}.

The first main contribution of this paper is the density result for QID random measures. We prove that a certain class of QID completely random measures (CRMs), which we denote by $\mathcal{A}$, is dense with respect to convergence in distribution (precisely in both weak and vague convergence) in the space of all CRMs, also know as random measures with independent increments or as independently scattered random measures. This result extends the density result in \cite{LPS} to the infinite dimensional setting of CRMs. 

The class $\mathcal{A}$ have quite remarkable features. First, as any CRM they have an almost sure representation in terms of an `atomless' ID component and an `atomic' one. Second, the number of atoms is finite. Third, these random measures are almost surely finite and even more their atomless component has finite L\'{e}vy measure.

Moreover, for the elements of this class, we are able to show an explicit spectral representation, namely the L\'{e}vy-Khintchine formulation, and prove that there exists a unique one-to-one correspondence between them and pairs of deterministic measures satisfying certain conditions, which we call characteristic pairs. We prove all these results also for the class of point processes with independent increments, of which the Poisson point process is an example.

With these results this paper shows that the fixed component of a CRM, which has been left out in Kingman's analysis and in the theory of CRM in general, have exactly the same nice representation results as the widely studied ordinary component. Thus, not only there is no real need of leaving out of the analysis the fixed component (as Kingman graphically says, fixed atoms can be removed by simple surgery), but this might also be dangerous since in many applications the fixed component has an irreplaceable role. This will also appear evident in the Bayesian setting we discuss in this work (see also \cite{Bro1}).

In the last section we investigate the impact of these results in the nonparametric Bayesian statistical framework presented by Broderick, Wilson and Jordan in \cite{Bro1} based on CRMs (see also \cite{Bro2}). In particular, we consider priors to be given by elements in $\mathcal{A}$ (with quasi-L\'{e}vy measure having a particular structure). We show that they are dense in the space of priors considered in \cite{Bro1} and \cite{Bro2} with respect to convergence in distribution, thus showing also that our density result is flexible enough to adjust to various assumptions/settings. Second, we present explicit formulations for their posterior distributions. Third, when focusing on point processes, we prove automatic conjugacy for all the elements of $\mathcal{A}$ under the only condition that the characteristic function of the posterior distribution has no zeros. This condition is satisfied in many situations and the result is more general than the the one of \cite{Bro1} which is based on the exponential structure of the likelihood.

We remark that the general nature of our results allow them to be applied in many Bayesian settings. Thus, the choice of the work of Broderick, Wilson and Jordan \cite{Bro1} represents a first easy example.

The paper is structured as follows. Section \ref{Sec-Notation} concerns with the notations and some preliminaries. In Section \ref{Sec-Atomless} we provide the density results for CRMs and in Subsection \ref{Subsec-Density-Point} the one for point processes with independent increments. In Section \ref{Sec-Properties}, we show various properties for the classes of QID random measures and QID point processes presented in Section \ref{Sec-Atomless}. In particular we present the L\'{e}vy-Khintchine formulation and the one-to-one correspondence of these random measures with their unique characteristic pair. In Section \ref{Sec-Bay}, we present the Bayesian setting and the relative results: computation of the posterior, convergence results for the posterior, and automatic conjugacy.
\section{Notation and Preliminaries}\label{Sec-Notation}
By a measure on a measurable space $(X,\mathcal{G})$ we always mean a positive measure on $(X,\mathcal{G})$, \textit{i.e.}~a $[0,\infty]$-valued $\sigma$-additive set function on $\mathcal{G}$ that assigns the value $0$ to the empty set. For a non-empty set $X$, by $\mathcal{B}(X)$ we mean the Borel $\sigma$-algebra of $X$, unless stated differently. The law and the characteristic function of a random variable $X$ will be denoted by $\mathcal{L}(X)$ and by $\hat{\mathcal{L}}(X)$, respectively. For two measurable spaces $(X,\mathcal{G})$ and $(Y,\mathcal{F})$, we denote by $\mathcal{G}\otimes\mathcal{F}$ the product $\sigma$-algebra of $\mathcal{G}$ and $\mathcal{F}$, and by $\mathcal{G}\times\mathcal{F}$ their Cartesian product. Let us recall some definitions.

\begin{defn}[extended signed measure]\label{Def-signedmeasure}
	Given a measurable space $(X, \Sigma)$, that is, a set $X$ with a $\sigma$-algebra $\Sigma$ on it, an extended signed measure is a function $\ \mu :\Sigma \to {\mathbb {R}}\cup \{\infty ,-\infty \}$ s.t.~$\mu (\emptyset )=0$ and $\mu$ is $\sigma$-additive, that is, it satisfies the equality $ \mu \left(\bigcup _{{n=1}}^{\infty }A_{n}\right)=\sum _{{n=1}}^{\infty }\mu (A_{n})$ where the series on the right must converge in ${\mathbb {R}}\cup \{\infty ,-\infty \}$ absolutely (namely the value of the series is independent of the order of its elements), for any sequence $A_{1}, A_{2},...$ of disjoint sets in $\Sigma$.
\end{defn}
\noindent As a consequence any extended signed measure can take plus or minus infinity as value, but not both. In this work, we use the term `signed measure' for an extended signed measure. Further, the \textit{total variation} of a signed measure $\mu$ is defined as the measure $|\mu|:\Sigma\rightarrow [0, \infty]$ defined by
\begin{equation}\label{def-totalvariation}
|\mu|(A):=\sup\sum_{j=1}^{\infty}|\mu(A_{j})|,
\end{equation}
where the supremum is taken over all the partitions $\{A_{j}\}$ of $A\in\Sigma$. The total variation $|\mu|$ is finite if and only if $\mu$ is finite. Let us recall the definition of a signed bimeasure.
\begin{defn}[Signed bimeasure]
	Let $(X,\Sigma)$ and $(Y,\Gamma)$ be two measurable spaces. A \textnormal{signed bimeasure} is a function $M:\Sigma\times\Gamma\rightarrow[-\infty,\infty]$ such that:
	\\\textnormal{(i)} the function $A\rightarrow M(A,B)$ is a signed measure on $\Sigma$ for every $B\in\Gamma$,
	\\\textnormal{(i)} the function $B\rightarrow M(A,B)$ is a signed measure on $\Gamma$ for every $A\in\Sigma$.
	
\end{defn}

Let $S$ be a separable and complete metric space with Borel $\sigma$-algebra $\textbf{S}$ and let $\hat{\mathbf{S}}$ be the ring composed by bounded Borel sets in $S$. The triplet $(S,\textbf{S},\hat{\textbf{S}})$ is called localised Borel space (see page 19 in \cite{Kallenberg2}).
\begin{defn}[random measure]\label{def-rm}
	A random measure $\xi$ on $S$, with underlying probability space $(\Omega,\mathcal{F},\mathbb{P})$, is a function $\Omega\times\textbf{S}\rightarrow[0,\infty]$, such that $\xi(\omega,B)$ is a $\mathcal{F}$-measurable in $\omega\in\Omega$ for fixed $B$ and a locally finite measure in $B\in\textbf{S}$ for fixed $\omega$.
\end{defn}
\begin{defn}
	[completely random measure] A completely random measure (CRM) $\xi$ is a random measure s.t.~for any disjoint $B_{1}, B_{2}, . . . , B_{k}\in \textbf{S}$, $k\in\mathbb{N}$, the random variables $\xi(B_{1}),\xi(B_{2}),...,\xi(B_{k})$ are
	independent. CRMs are also called independently scattered random measure or random measure with independent increments.
\end{defn}
\begin{defn}[diffuse random measure]
	Using the notation of the previous definition, we say that a random measure $\xi$ on $S$ is \textnormal{diffuse} if $\xi(\omega,B)$ is a locally finite diffuse measure in $B\in\textbf{S}$ for fixed $\omega$.
\end{defn}
\begin{rem}
	\textnormal{Term \textit{finite} for random measures stands for \textit{a.s.~finite}. Thus, for a \textit{finite random measure}~we mean an \textit{a.s.~finite random measure}.} 
\end{rem}

For a random measure $\xi$ on a Polish space $X$, $x\in X$ is a fixed atom of $\xi$ if and only if $\mathbb{P}(|\xi(\{x\})|>0)>0$. Further, a random measure $\xi$ is called atomless if $\xi(\{x\})\stackrel{a.s.}{=}0$ for every $x\in X$. The atomless condition is for random measures what the continuity in probability is for continuous time stochastic processes. We remark that an atomless random measure is not necessarily a diffuse random measure (see Corollary 12.11 in \cite{Kallenberg}). For example, think of a Poisson point process with $\mathbb{E}[\xi(s)]\equiv0$, like the homogeneous Poisson point process, which has no fixed atoms but it is not diffuse.

Now, we introduce the concept of a quasi-L\'{e}vy type measure. We start with the following definition, which we recall from \cite{LPS}:
\begin{defn}\label{def1}
	Let $\mathcal{B}_{r}(\mathbb{R}):=\{B\in\mathcal{B}(\mathbb{R})| B \cap(-r, r) = \emptyset\}$ for $r > 0$ and $\mathcal{B}_{0}(\mathbb{R}):= \bigcup_{r>0} \mathcal{B}_{r}(\mathbb{R})$ be the class of all
	Borel sets that are bounded away from zero. Let $\nu : \mathcal{B}_{0}(\mathbb{R})\rightarrow\mathbb{R}$ be a function such that
	$\nu_{|\mathcal{B}_{r}(\mathbb{R})}$ is a finite signed measure for each $r > 0$ and denote the total variation, positive and negative part of $\nu_{|\mathcal{B}_{r}(\mathbb{R})}$ by $|\nu_{|\mathcal{B}_{r}(\mathbb{R})}|$, $\nu^{+}_{|\mathcal{B}_{r}(\mathbb{R})}$ and $\nu^{-}_{|\mathcal{B}_{r}(\mathbb{R})}$ respectively. Then the \textnormal{total variation} $|\nu|$, the \textnormal{positive part} $\nu^{+}$ and the \textnormal{negative part} $\nu^{-}$ of $\nu$ are defined to be the unique measures on $(\mathbb{R},\mathcal{B}(\mathbb{R}))$ satisfying
	\begin{equation*}
	|\nu|(\{0\})=\nu^{+}(\{0\})=\nu^{-}(\{0\})=0
	\end{equation*}
	\begin{equation*}
	\text{and}\quad|\nu|(A)=|\nu_{|\mathcal{B}_{r}(\mathbb{R})}|,\,\,\nu^{+}(A)=\nu_{|\mathcal{B}_{r}(\mathbb{R})}^{+}(A),\,\,\nu^{-}(A)=\nu_{|\mathcal{B}_{r}(\mathbb{R})}^{-}(A),
	\end{equation*}
	for $A\in\mathcal{B}_{r}(\mathbb{R})$, for some $r>0$.
\end{defn}
As mentioned in \cite{LPS}, $\nu$ is not a a signed measure because it is defined on $\mathcal{B}_{0}(\mathbb{R})$, which is not a $\sigma$-algebra. In the case it is possible to extend the definition of $\nu$ to $\mathcal{B}(\mathbb{R})$ such that $\nu$ is a signed measure then we will identify $\nu$ with its extension to $\mathcal{B}(\mathbb{R})$ and speak of $\nu$ as a signed measure. Moreover, the uniqueness of $|\nu|$, $\nu^{+}$ and $\nu^{-}$ is ensured by an application of the Carath\'{e}odory's extension theorem (see Lemma 2.14 in \cite{Pass}). Further, notice that $\mathcal{B}_{0}(\mathbb{R})= \{B\in\mathcal{B}(\mathbb{R}):0\notin \overline{B} \}\neq \{B\in\mathcal{B}(\mathbb{R}):0\notin B \}$ (see Remark 2.6 in \cite{Pass}).
\begin{defn}[quasi-L\'{e}vy type measure, quasi-L\'{e}vy measure, QID distribution, from \cite{LPS}]
	A \textnormal{quasi-L\'{e}vy type measure} is a function $\nu: \mathcal{B}_{0}(\mathbb{R})\rightarrow \mathbb{R}$ satisfying the
	condition in Definition \ref{def1} and such that its total variation $|\nu|$ satisfies $\int_{\mathbb{R}} (1\wedge x^{2} ) |\nu|(dx) <\infty$. Let $\mu$ be a probability distribution on $\mathbb{R}$. We say that $\mu$ is \textnormal{quasi-infinitely divisible} if its characteristic function has a representation
	\begin{equation*}
	\hat{\mu}(\theta)=\exp\left(i\theta \gamma-\frac{\theta^{2}}{2}a+\int_{\mathbb{R}}\left(e^{i\theta x}-1-i\theta\tau(x)\right)\nu(dx)\right)
	\end{equation*}
	where $a, \gamma \in \mathbb{R}$ and $\nu$ is a quasi-L\'{e}vy type measure. The characteristic triplet $(\gamma,a,\nu)$
	of $\mu$ is unique, and $a$ and $\gamma$ are called the \textnormal{Gaussian variance} and the \textnormal{drift} of $\mu$, respectively. A quasi-L\'{e}vy type measure $\nu$ is called \textnormal{quasi-L\'{e}vy measure}, if additionally there exist a quasi-infinitely divisible distribution $\mu$ and some $a,\gamma\in\mathbb{R}$	such that $(\gamma,a,\nu)$ is the characteristic triplet of $\mu$. We call $\nu$ the quasi-L\'{e}vy measure of $\mu$.
\end{defn}
The above definition extend to the $\mathbb{R}^{d}$ case (for $d>1$) as shown in Remark 2.4 in \cite{LPS}. As pointed out in Example 2.9 of \cite{LPS}, a quasi-L\'{e}vy measure is always a quasi-L\'{e}vy type measure, while the converse is not true. Moreover, we say that a function $f$ is \textit{integrable with respect to quasi-L\'{e}vy type measure} $\nu$ if it is integrable with respect to $|\nu|$. Then, we define:
\begin{equation*}
\int_{B}fd\nu:=\int_{B}fd\nu^{+}-\int_{B}fd\nu^{-},\quad B\in\mathcal{B}(\mathbb{R}).
\end{equation*}
In this work we always keep the same order for the elements in the characteristic triplet: the first element is the drift, the second one is the Gaussian variance, and the third one is the (quasi) L\'{e}vy measure.
\begin{defn}
	[QID random measure] Let $\Lambda$ be a random measure. If $\Lambda(A)$ is a QID random variable, for every $A\in\textbf{S}$, then we call $\Lambda$ a QID random measure.
\end{defn}
We conclude with the following result on QID distributions.
\begin{thm}[Theorem 4.3.4 in \cite{Cuppens}]\label{Cupp} Let $d\in\mathbb{N}$. The characteristic triplet $(\gamma,0,\nu)$, where $\nu$ is a finite quasi-L\'{e}vy type measure, is the characteristic triplet of a QID distribution on $\mathbb{R}^{d}$ if and only if $\exp(\nu):=\sum_{n=1}^{\infty}\frac{\nu^{*n}}{n!}$ is a measure. In that case, $\mu\sim(\gamma,0,\nu)$ is given by
	\begin{equation*}
	\mu=\frac{\delta_{\gamma}*\exp(\nu)}{\exp(\nu(\mathbb{R}^{d}))}.
	\end{equation*}	
\end{thm}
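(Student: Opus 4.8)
The plan is to reduce the statement to the uniqueness of Fourier transforms of finite signed measures. Since $\nu$ is finite, put $m:=|\nu|(\mathbb{R}^d)<\infty$; then $|\nu^{*n}|(\mathbb{R}^d)\le m^n$, so $\sum_{n}|\nu^{*n}|(\mathbb{R}^d)/n!\le e^{m}<\infty$ and the series $\exp(\nu)=\sum_{n\ge 0}\nu^{*n}/n!$ (where I include the $n=0$ term $\delta_0$, so that $\exp(\nu)(\mathbb{R}^d)=\exp(\nu(\mathbb{R}^d))$ and the normalisation in the statement is correct) converges in total variation to a finite \emph{signed} measure. Its Fourier transform is $\widehat{\exp(\nu)}(\theta)=\exp(\hat\nu(\theta))$ with $\hat\nu(\theta)=\int_{\mathbb{R}^d}e^{i\langle\theta,x\rangle}\,\nu(dx)$. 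Thus the whole theorem amounts to deciding when this finite signed measure is in fact nonnegative.

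First I would record the candidate characteristic function of $(\gamma,0,\nu)$. Because $\nu$ is finite, every integrand in the L\'{e}vy--Khintchine exponent is $|\nu|$-integrable: $|e^{i\langle\theta,x\rangle}-1|\le 2$, and the truncation $\tau$ is bounded, so the compensator $c:=\int_{\mathbb{R}^d}\tau(x)\,\nu(dx)$ is a finite vector. Absorbing $c$ into the drift, the exponent collapses to
\[
i\langle\theta,\gamma\rangle+\int_{\mathbb{R}^d}\bigl(e^{i\langle\theta,x\rangle}-1\bigr)\nu(dx)=i\langle\theta,\gamma\rangle+\hat\nu(\theta)-\nu(\mathbb{R}^d),
\]
so that the candidate characteristic function is $e^{i\langle\theta,\gamma\rangle}\exp\bigl(\hat\nu(\theta)-\nu(\mathbb{R}^d)\bigr)$, which is exactly the Fourier transform of $\delta_\gamma*\exp(\nu)/\exp(\nu(\mathbb{R}^d))$. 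This also pins down the truncation convention under which the drift in the stated formula is precisely $\gamma$.

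For sufficiency, assume $\exp(\nu)$ is a nonnegative measure and define $\mu:=\delta_\gamma*\exp(\nu)/\exp(\nu(\mathbb{R}^d))$. It is nonnegative with total mass $\exp(\nu)(\mathbb{R}^d)/\exp(\nu(\mathbb{R}^d))=1$, hence a probability distribution, and by the previous step its characteristic function has the quasi-L\'{e}vy--Khintchine form with triplet $(\gamma,0,\nu)$; being a genuine exponential it never vanishes, so $\mu$ is QID with the required triplet. For necessity, suppose $(\gamma,0,\nu)$ is the triplet of a QID law $\mu$. Then $\exp(\nu(\mathbb{R}^d))\,(\delta_{-\gamma}*\mu)$ and $\exp(\nu)$ are two finite signed measures whose Fourier transforms both equal $\exp(\hat\nu(\theta))$, so by uniqueness they coincide. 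The left-hand side is a nonnegative multiple of a probability measure, hence nonnegative, whence $\exp(\nu)$ is a measure; rearranging the same identity yields the displayed formula for $\mu$.

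The computation is elementary once it is set up, so I do not expect a deep obstacle. The two points needing care are (i) checking that finiteness of $\nu$ turns the compensator into a mere finite drift shift, so that the exponent really reduces to $\hat\nu(\theta)-\nu(\mathbb{R}^d)$ and the drift in the formula is exactly $\gamma$; and (ii) the positivity transfer in the necessity direction, where one must invoke that a finite signed measure sharing its Fourier transform with a probability measure must itself equal that nonnegative measure. Both rest on the uniqueness theorem for Fourier--Stieltjes transforms of finite signed measures.
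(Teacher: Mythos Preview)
The paper does not supply a proof of this statement; it is quoted verbatim as Theorem 4.3.4 from Cuppens and then used as a black box in Section~\ref{Sec-Properties}. Your argument is correct and is essentially the standard one behind Cuppens' result: since $\nu$ is finite, the series $\sum_{n\ge 0}\nu^{*n}/n!$ converges in total variation to a finite signed measure with Fourier transform $\exp(\hat\nu)$, and both directions then reduce to the injectivity of the Fourier--Stieltjes transform on finite signed measures.

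Two remarks on points you already handle carefully. First, you are right that the $n=0$ term $\delta_0$ must be included for the normalisation $\exp(\nu)(\mathbb{R}^d)=\exp(\nu(\mathbb{R}^d))$ to hold; the paper's display starting the sum at $n=1$ is a slip. Second, your observation about the truncation convention is exactly the point: the displayed formula $\mu=\delta_\gamma*\exp(\nu)/\exp(\nu(\mathbb{R}^d))$ matches the triplet $(\gamma,0,\nu)$ only when the compensator $\tau$ is taken to be zero, which is legitimate precisely because $\nu$ is finite. With the paper's generic truncation $\tau$ the drift would shift by $\int\tau(x)\,\nu(dx)$, so the statement tacitly adopts the zero-centering representation.
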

\section{The density result for QID CRMs}\label{Sec-Atomless}
In this section we present the density results for QID CRMs in the space of all CRMs with respect to convergence in distribution. Let us start with some preliminaries. Let $S$ be a separable and complete metric space with Borel $\sigma$-algebra $\textbf{S}$ and let $\hat{\mathbf{S}}$ be the ring composed by bounded Borel sets in $S$. Let $\hat{C}_{S}$ be the space of all bounded continuous functions $f:S\rightarrow\mathbb{R}_{+}$ with bounded support. Let $\mathcal{M}_{S}$ be the space of locally finite measures, namely $\mu\in\mathcal{M}_{S}$ if $\mu(B)<\infty$ for every $B\in\hat{\mathbf{S}}$. The space $\mathcal{M}_{S}$ might be endowed with the vague topology, denoted by $\mathbf{B}_{\mathcal{M}_{S}}$, generated by the integration maps $\pi_{f}:\mu\mapsto\int f(x) \mu(dx)$, for all $f\in \hat{C}_{S}$. The vague topology is the coarsest topology making all $\pi_{f}$ continuous. The measurable space $(\mathcal{M}_{s},\mathbf{B}_{\mathcal{M}_{S}})$ is a Polish space. The associated notion of vague convergence denoted by $\mu_{n}\stackrel{v}{\rightarrow}\mu$ is defined by the condition $\int f(x) \mu_{n}(dx)\rightarrow\int f(x) \mu(dx)$ for all $f\in\hat{C}_{S}$.

An equivalent definition of random measure (see Definition \ref{def-rm}) is the following: a random measure $\xi$ is a measurable mapping from $(\Omega,\mathcal{F},\mathbb{P})$ to $(\mathcal{M}_{S},\mathcal{B}_{\mathcal{M}_{S}})$, where $\mathcal{B}_{\mathcal{M}_{S}}$ is the topology generated by all projection maps $\pi_{B}:\mu\mapsto\mu(B)$ with $B\in\mathbf{S}$, or, equivalently, by all integration maps $\pi_{f}$ with measurable $f\geq0$. From Lemma 4.1 in \cite{Kallenberg0} or Theorem 4.2 in \cite{Kallenberg2}, we know that $\mathcal{B}_{\mathcal{M}_{S}}$ and $\mathbf{B}_{\mathcal{M}_{S}}$ coincide. Hence it is equivalent to consider a random measure as a measurable mapping from $(\Omega,\mathcal{F},\mathbb{P})$ to $(\mathcal{M}_{S},\textbf{B}_{\mathcal{M}_{S}})$ or to $(\mathcal{M}_{S},\mathcal{B}_{\mathcal{M}_{S}})$.

The convergence in distribution of $\xi_{n}$ to $\xi$ means that $\mathbb{E}[g(\xi_{n})]\rightarrow\mathbb{E}[g(\xi)]$ for every bounded continuous function $g$ on $\mathcal{M}_{S}$, or equivalently that $\mathcal{L}(\xi_{n})\stackrel{w}{\rightarrow}\mathcal{L}(\xi)$, where for any bounded measures $\mu_{n}$ and $\mu$, the weak convergence $\mu_{n}\stackrel{w}{\rightarrow}\mu$ stands for $\int g(y)\mu_{n}(dy)\rightarrow\int g(y)\mu(dy)$ for all $g$ as above. We write $\xi_{n}\stackrel{vd}{\rightarrow}\xi$ to stress that the convergence of distribution is for random measures considered as random elements in the space $\mathcal{M}_{S}$ with vague topology. As mentioned in the previous section, in this setting an atom of a random measure $\xi$ is an element $s\in S$ such that $\mathbb{P}(\xi(\{s\})>0)>0$.

We recall now a fundamental result by Harris, see \cite{Harris}.
\begin{thm}[see Theorem 4.11 in \cite{Kallenberg2}]\label{density-T1}
	Let $\xi,\xi_{1},\xi_{2},...$ be random measures on $S$. Then these conditions are equivalent:
	\\\textnormal{(i)} $\xi_{n}\stackrel{vd}{\rightarrow}\xi$,
	\\\textnormal{(ii)} $\int f(x)\xi_{n}(dx)\stackrel{d}{\rightarrow}\int f(x)\xi(dx)$ for all $f\in\hat{C}_{S}$,
	\\\textnormal{(iiI)} $\mathbb{E}[\exp(-\int f(x)\xi_{n}(dx))]\rightarrow\mathbb{E}[\exp(-\int f(x)\xi(dx))]$ for all $f\in\hat{C}_{S}$ with $f\leq1$.	
\end{thm}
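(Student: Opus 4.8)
The plan is to establish the cycle of implications (i)\,$\Rightarrow$\,(ii)\,$\Rightarrow$\,(iii)\,$\Rightarrow$\,(i), with the last implication carrying essentially all of the difficulty. For (i)\,$\Rightarrow$\,(ii) I would invoke the continuous mapping theorem: by the very definition of the vague topology on $\mathcal{M}_S$, each integration map $\pi_f:\mu\mapsto\int f\,d\mu$ with $f\in\hat{C}_S$ is continuous on $(\mathcal{M}_S,\mathbf{B}_{\mathcal{M}_S})$, so $\xi_n\stackrel{vd}{\rightarrow}\xi$ is transported through $\pi_f$ to yield $\int f\,d\xi_n\stackrel{d}{\rightarrow}\int f\,d\xi$ in $\mathbb{R}_+$. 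For (ii)\,$\Rightarrow$\,(iii) I would simply note that $x\mapsto e^{-x}$ is bounded and continuous on $[0,\infty]$; testing the weak convergence of $\mathcal{L}(\int f\,d\xi_n)$ against this function gives $\mathbb{E}[e^{-\int f\,d\xi_n}]\to\mathbb{E}[e^{-\int f\,d\xi}]$ for every $f\in\hat{C}_S$, and restricting to $f\le 1$ is only a weakening.

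The substantive step is (iii)\,$\Rightarrow$\,(i). A preliminary scaling observation will be used twice: for arbitrary $h\in\hat{C}_S$ and small $\lambda>0$ one has $\lambda h\le 1$, so (iii) gives $\mathbb{E}[e^{-\lambda\int h\,d\xi_n}]\to\mathbb{E}[e^{-\lambda\int h\,d\xi}]$ on an interval of $\lambda$'s; since Laplace transforms of nonnegative variables are analytic on the right half-plane, Vitali's theorem propagates the limit to all $\lambda>0$, in particular $\lambda=1$. To obtain relative compactness of $\{\mathcal{L}(\xi_n)\}$ for vague convergence in distribution, by Prohorov's theorem on the Polish space $\mathcal{M}_S$ together with the relative-compactness criterion for random measures it suffices that $\{\int f\,d\xi_n\}_n$ be tight in $[0,\infty)$ for each $f\in\hat{C}_S$. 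This follows from the standard Laplace-transform tightness lemma: writing $\psi_n(\lambda)=\mathbb{E}[e^{-\lambda\int f\,d\xi_n}]$, one has $\limsup_n(1-\psi_n(\lambda))=1-\mathbb{E}[e^{-\lambda\int f\,d\xi}]$, which tends to $0$ as $\lambda\downarrow 0$ because $\int f\,d\xi<\infty$ a.s.\ ($f$ has bounded support and $\xi$ is locally finite). Hence no mass escapes to $+\infty$ and tightness holds.

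Finally I would identify the limit. By relative compactness, every subsequence of $\{\xi_n\}$ has a further subsequence with $\xi_{n_k}\stackrel{vd}{\rightarrow}\eta$ for some random measure $\eta$. Applying the already-proved implication (i)\,$\Rightarrow$\,(iii) to this subsequence gives $\mathbb{E}[e^{-\int f\,d\eta}]=\lim_k\mathbb{E}[e^{-\int f\,d\xi_{n_k}}]$, while (iii) forces this limit to equal $\mathbb{E}[e^{-\int f\,d\xi}]$ for every $f\in\hat{C}_S$ with $f\le 1$; by the scaling argument above the two Laplace functionals then agree for all $f\in\hat{C}_S$. Invoking the uniqueness theorem for Laplace functionals — the functional $f\mapsto\mathbb{E}[e^{-\int f\,d\xi}]$ determines the finite-dimensional distributions of $(\xi(B_1),\dots,\xi(B_k))$, upon taking $f$ a nonnegative step function approximated by elements of $\hat{C}_S$, and hence the law of $\xi$ on $\mathcal{M}_S$ — I conclude $\eta\stackrel{d}{=}\xi$. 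Since every subsequential limit coincides in law with $\xi$, the whole sequence satisfies $\xi_n\stackrel{vd}{\rightarrow}\xi$, closing the cycle.

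The routine directions rest only on the continuous mapping theorem and the definition of weak convergence. I expect the main obstacle to be the (iii)\,$\Rightarrow$\,(i) step, and within it the combination of the tightness/relative-compactness argument on $\mathcal{M}_S$ with the Laplace-functional uniqueness theorem; these are precisely the two ingredients whose verification for random measures (rather than for finite-dimensional laws) requires care.
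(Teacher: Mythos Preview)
The paper does not give its own proof of this statement: Theorem~\ref{density-T1} is quoted verbatim from Kallenberg's book (Theorem~4.11 in \cite{Kallenberg2}) as a tool to be used later, with no argument supplied. Your proof proposal is correct and is essentially the standard argument one finds in Kallenberg's treatment: the continuous mapping theorem for (i)\,$\Rightarrow$\,(ii), boundedness and continuity of $x\mapsto e^{-x}$ for (ii)\,$\Rightarrow$\,(iii), and for (iii)\,$\Rightarrow$\,(i) a tightness step via the Laplace-transform behaviour near $\lambda=0$ combined with Prohorov's theorem on the Polish space $\mathcal{M}_S$, followed by identification of subsequential limits through the uniqueness of the Laplace functional. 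The scaling/analyticity trick you use to pass from $f\le 1$ to general $f\in\hat{C}_S$ is also standard and correct. There is nothing to compare against in the present paper beyond the citation itself.
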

The following density result extends Theorem 4.1 in \cite{LPS}.
\begin{thm}\label{pro8}
Let $A$ be a connected interval of the real line. The class of QID distributions with finite quasi-L\'{e}vy measure, zero Gaussian variance and with support on $A$ is dense in the class of probability distributions with support on $A$ with respect to weak convergence.
\end{thm}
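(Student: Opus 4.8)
The plan is to realise the approximants as finitely supported lattice distributions whose characteristic functions have no zeros, and then to invoke the characterisation recalled from \cite{LPS} that a distribution concentrated on a dilation and shift of $\mathbb{Z}$ is QID precisely when its characteristic function never vanishes. The reason a genuinely \emph{signed} quasi-L\'{e}vy measure is indispensable here is that, when $A$ is bounded, no nondegenerate compound Poisson (i.e.\ positive-L\'{e}vy) law can be supported in $A$: convolution spreads the support outside any bounded interval. Allowing $\nu$ to change sign is exactly what lets us keep the support inside $A$, and the lattice construction will deliver all three features (finite quasi-L\'{e}vy measure, zero Gaussian variance, support in $A$) simultaneously.

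First I would reduce to the finitely supported lattice case. Fix a mesh $h>0$ and a lattice $L_h$ of spacing $h$. Since $A$ is an interval, the lattice points lying in $A$ form a consecutive block of $L_h$, and any probability distribution $\mu$ with support in $A$ can be weakly approximated, as $h\downarrow 0$, by finitely supported distributions $p_h$ carried by $L_h\cap A$: one simply redistributes the mass of $\mu$ on small blocks onto nearby lattice points of $A$, convexity of $A$ guaranteeing that such points exist (endpoints are handled by pushing slightly inside). This is routine and yields $p_h\stackrel{w}{\to}\mu$ with $\mathrm{supp}(p_h)\subseteq A$.

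The crux is the perturbation of each $p_h$, within the finitely supported distributions carried by $L_h\cap A$, to a distribution $q_h$ whose characteristic function has no zeros while remaining weakly close to $p_h$. Writing the support of $p_h$ as $\{x_0+kh:0\le k\le N\}\subseteq A$, its characteristic function is $\hat p_h(\theta)=e^{ix_0\theta}P(e^{ih\theta})$, where $P(z)=\sum_{k=0}^{N}p_k z^k$ has nonnegative coefficients summing to one, and the zeros of $\hat p_h$ on $\mathbb{R}$ correspond exactly to the roots of $P$ on the unit circle. The set of coefficient vectors for which $P$ has a root on $\{|z|=1\}$ is cut out by the two real equations $\mathrm{Re}\,P(e^{i\phi})=\mathrm{Im}\,P(e^{i\phi})=0$, hence is a closed, nowhere dense subset of the probability simplex; its complement is open and dense. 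Thus $p_h$ can be approximated arbitrarily well by probability vectors on the same lattice points (all of which lie in $A$ by convexity) whose polynomial $P$ has no root on the unit circle, giving $q_h\stackrel{w}{\to}\mu$ with $\mathrm{supp}(q_h)\subseteq A$ and $\hat q_h$ nowhere zero. By the lattice characterisation of \cite{LPS}, each $q_h$ is then QID, and the three required features follow: the support lies in $A$ by construction; the Gaussian variance vanishes because $|\hat q_h(\theta)|=|P(e^{ih\theta})|$ is periodic and does not decay, whereas a Gaussian part $a>0$ would contribute a factor $e^{-a\theta^{2}/2}$ forcing $|\hat q_h(\theta)|\to 0$; and since $\hat q_h$ is a nonvanishing trigonometric polynomial, a continuous branch of $\log\hat q_h$ is a smooth periodic function plus a linear drift (the latter recording the winding number of $P$ around the origin). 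The periodic part has Fourier coefficients $c_k$ decaying geometrically, the roots of $P$ being bounded away from the circle, so the quasi-L\'{e}vy measure $\nu=\sum_{k\neq 0}c_k\delta_{kh}$ satisfies $|\nu|(\mathbb{R})=\sum_{k\neq 0}|c_k|<\infty$ and is therefore finite.

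I expect the main obstacle to be the perturbation step: arguing cleanly that the zeros of the characteristic function can always be removed within the class of lattice probability distributions supported in $A$, via the codimension/density argument for roots on the unit circle, and keeping track that the perturbation stays inside the simplex and weakly close. The accompanying bookkeeping that the resulting quasi-L\'{e}vy measure is genuinely finite, through the geometric decay of the Fourier coefficients of $\log\hat q_h$, is the secondary technical point; the discretisation and the identification of the Gaussian part as zero are comparatively routine.
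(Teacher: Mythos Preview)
Your overall strategy---discretise onto a finite lattice inside $A$, perturb within the lattice simplex to kill the zeros of the characteristic function, then invoke the lattice QID criterion of \cite{LPS}---is exactly the route the paper takes. The paper differs only in that it makes the perturbation explicit: after first arranging all lattice weights to be strictly positive, it factors $P(w)=a_{N}\prod_{j}(w-\xi_{j})$ and replaces each root $\xi_{j}$ by $\xi_{j}+h$ for small $h>0$, which moves every root off the unit circle while keeping the coefficients real, positive, and close to the originals. The paper then simply cites Corollary~3.10 of \cite{LPS} for the conclusion that the resulting lattice law is QID with zero Gaussian variance and finite quasi-L\'evy measure, rather than re-deriving these facts via the Fourier expansion of $\log\hat q_{h}$ as you do (your derivation is correct, just redundant given the reference).

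Your genericity argument for the perturbation step has a genuine gap. The bad set $\{p\in\Delta:\exists\,\phi,\ P_{p}(e^{i\phi})=0\}$ is not ``cut out by two real equations'': it is the \emph{projection} onto $\Delta$ of the zero locus $\{(p,\phi):P_{p}(e^{i\phi})=0\}\subset\Delta\times S^{1}$, and the projection of a codimension-two set from an $(N{+}1)$-dimensional ambient space into an $N$-dimensional one can a priori be full-dimensional. So ``two equations, hence nowhere dense'' does not follow. You need an actual argument that the complement is dense in the simplex. One clean fix that keeps your framework: for $r\in(0,1)$ set $Q_{r}(z):=P(rz)/P(r)$. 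Since $P(r)>0$, the coefficients $p_{j}r^{j}/P(r)$ are nonnegative and sum to $1$, the support is unchanged (hence still inside $A$), the roots of $Q_{r}$ are $\xi_{j}/r$, and for $r$ chosen in $(\max\{|\xi_{j}|:|\xi_{j}|<1\},1)$ none of these lie on the unit circle; letting $r\uparrow1$ gives $Q_{r}\to P$ coefficientwise. Alternatively, adopt the paper's root-shift. With either repair the rest of your outline goes through.
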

\begin{proof}
	Some arguments of the proof are in nature similar to the ones of the proof of Theorem 4.1 in \cite{LPS}. First, we prove the result when $A$ is bounded.
	
	Let $A$ be a finite closed interval, thus $A=[k,c]$ for some $k,c\in\mathbb{R}$. Let $\mu$ be a probability distribution with support $[k,c]$. For $n \in\mathbb{N}$, let $b_{j,n} =k+(c-k)j/2n^{2}$, $j\in\{0,...,2n^{2}\}$ and define the discrete distribution $\mu_{n}$ concentrated on the lattice $\{b_{0,n},...,b_{2n^{2},n} \}$ by
		\begin{equation}\label{eq-density}
		\mu_{n}(\{b_{j,n} \})=\begin{cases}
		\mu((-\infty,b_{0,n}]),& j=0,\\
		\mu((b_{j-1,n},b_{j,n}]),& j=1,...,2n^{2}-1,\\
		\mu((b_{2n^{2}-1,n},\infty)),& j=2n^{2}.\\
		\end{cases}
		\end{equation}
	Then, $\mu_{n}\stackrel{w}{\rightarrow}\mu$ as $n\rightarrow\infty$. Observe that $\mu_{n}$ is the probability distribution of a random variable with values on $\{b_{0,n},...,b_{2n^{2},n} \}\subset[k,c]$. It remains to prove that each $\mu_{n}$ is a weak limit of QID distributions with finite quasi-L\'{e}vy measure, zero Gaussian variance and with support on $[k,c]$. W.l.o.g.~assume that the approximating sequence of distributions $\sigma$ is such that $\sigma(\{b_{j,n} \})>0$ for every $j\in\{0,...,2n^{2}\}$. Assume that the characteristic function $\hat{\sigma}$ has zeros (in the other case we can directly use Corollary 3.10 in \cite{LPS} to conclude). Let $X$ be a random variable with distribution $\sigma$ and define $Y = \frac{(X-k)2n^{2}}{c-k}$. Then, $Y$ is concentrated on $\{0,...,2n^{2}\}$ with masses $a_{j} = \mathbb{P}(Y = j) > 0$ for $j = 0, . . . , 2n^{2}$, and its characteristic function has zeroes. Then, the polynomial $f (w) =\sum_{j=0}^{2n^{2}}a_{j}w^{j}$ has zeroes on the unit circle. Factorizing, we obtain $f(w)=a_{2n^{2}}\prod_{j=1}^{2n^{2}}(w-\xi_{j})$, where $\xi_{j}$, $j=1,...,2n^{2}$, denote the complex roots. Let $f_{h}(w)=a_{2n^{2}}\prod_{j=1}^{2n^{2}}(w-\xi_{j}-h)$, where $w\in\mathbb{C}$ and $h>0$. Then, for small enough $h$, $f_{h}$ is a polynomial with real coefficients, namely $f_{h}(w)=\sum_{j=0}^{2n^{2}}a_{h,j}w^{j}$ with $a_{h,j}\in\mathbb{R}$. Observe that for small enough $h$, $a_{h,j}$ and $a_{j}$ will be close, so $a_{h,j}>0$. Now, let $Z_{h}$ be a random variable with distribution $\sigma_{h}=\left(\sum_{j=0}^{2n^{2}}a_{h,j}\right)^{-1}\sum_{j=0}^{2n^{2}}a_{h,j}\delta_{j}$ and let $X_{h}=\frac{Z_{h}(c-k)}{2n^{2}}+k$. Observe that, for every $h>0$, $X_{h}$ is random variable with values on the lattice $\{b_{0,n},...,b_{2n^{2},n} \}$ and its characteristic function has no zeros, and that $X_{h}\stackrel{d}{\rightarrow}X$ as $h\searrow0$. Finally, by Corollary 3.10 in \cite{LPS} we know that $X_{h}$ is QID with finite quasi-L\'{e}vy measure and zero Gaussian variance.
	
	Observe that if $A$ is a bounded open interval, say $A=(k',c')$ for some $c,k\in\mathbb{R}$, then the above arguments apply. Let $\mu$ be a probability distribution with support $(k',c')$. For any $n\in\mathbb{N}$ let $k'_{n}=k'+\frac{(c'-k')}{2n^{2}}$ and $c'_{n}=c'-\frac{(c'-k')}{2n^{2}}$ and let $b_{j,n} =k'_{n}+(c'_{n}-k'_{n})j/2n^{2}$, $j\in\{0,...,2n^{2}\}$ and define the discrete distribution $\mu_{n}$ concentrated on the lattice $\{b_{0,n},...,b_{2n^{2},n} \}$ as in (\ref{eq-density}). Then, $\mu_{n}\stackrel{w}{\rightarrow}\mu$ as $n\rightarrow\infty$ and, applying the same reaming arguments (in which $n$ is fixed) for $k'_{n}$ and $c'_{n}$ instead of $k$ and $c$, we obtain the result for $A$ bounded and open.
	
	Let now $A$ be an unbounded interval of the form $A=[k,\infty)$ for some $k\in\mathbb{R}$. Let $\mu$ be a probability distribution with support on $[k,\infty)$. For $n \in\mathbb{N}$, let $b_{j,n} = k+j/n$, $j\in\{0,...,2n^{2}\}$ and define the discrete distribution $\mu_{n}$ concentrated on the lattice $\{b_{0,n},...,b_{2n^{2},n} \}$ as in (\ref{eq-density}). Then, $\mu_{n}\stackrel{w}{\rightarrow}\mu$ as $n\rightarrow\infty$. Using the notation above, let $X$ be a random variable with distribution $\sigma$ and define $Y = (X-k)n$. Then, $Y$ is concentrated on $\{0,...,2n^{2}\}$ with masses $a_{j}$ and its characteristic function has zeroes by assumption. We proceed as before. Thus, for small enough $h$, we obtain a polynomial with real coefficients $f_{h}$, namely $f_{h}(w)=\sum_{j=0}^{2n^{2}}a_{h,j}w^{j}$ with $a_{h,j}\in\mathbb{R}$ and $a_{h,j}>0$, for small enough $h$. Then, let $Z_{h}$ be a random variable with distribution $\sigma_{h}=\left(\sum_{j=0}^{2n^{2}}a_{h,j}\right)^{-1}\sum_{j=0}^{2n^{2}}a_{h,j}\delta_{j}$ and let $X_{h}=\frac{Z_{h}}{n}+k$. Then, $X_{h}$ is random variables with support on $\{b_{0,n},...,b_{2n^{2},n} \}\subset[k,\infty)$ and its characteristic function has no zeros, and that $X_{h}\stackrel{d}{\rightarrow}X$ as $h\searrow0$. Hence, by Corollary 3.10 in \cite{LPS} we obtain the result. 
	
	Similarly we obtain the result for $(k',\infty)$, for $(-\infty,c]$ and for $(-\infty,c')$, where $k',c,c'\in\mathbb{R}$.
\end{proof}

Recall that the L\'{e}vy-Prokhorov metric (or better just L\'{e}vy metric since we work on $\mathbb{R}$) for two probability distributions $F$ and $G$ on $\mathbb{R}$ is defined as
\begin{equation*}
\rho(F,G):=\inf\left\{\varepsilon>0\,|\,F(x-\varepsilon)-\varepsilon\leq G(x)\leq F(x+\varepsilon)+\varepsilon\textnormal{ for all $x\in\mathbb{R}$} \right\}.
\end{equation*}
\begin{lem}\label{lemmaProk}
Let $F$ and $G$ be any two probability distributions on $\mathbb{R}$ and let $F_{c}(x):=F(\frac{x}{c})$ and $G_{c}(x):=G(\frac{x}{c})$ where $c\in\mathbb{R}\setminus\{0\}$. For every positive constant $c\leq1$ we have that $\rho(F_{c},G_{c})\leq \rho(F,G)$.
\end{lem}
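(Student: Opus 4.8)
The plan is to verify directly that every $\varepsilon$ that certifies $\rho(F,G)\le\varepsilon$ also certifies $\rho(F_{c},G_{c})\le\varepsilon$, and then to pass to the infimum. First I would fix an arbitrary $\varepsilon>\rho(F,G)$; by the definition of the L\'{e}vy metric this means
\begin{equation*}
F(x-\varepsilon)-\varepsilon\le G(x)\le F(x+\varepsilon)+\varepsilon\qquad\text{for all }x\in\mathbb{R}.
\end{equation*}
The goal then becomes to show that the analogous two-sided bound holds with $F,G$ replaced by $F_{c},G_{c}$ and with the same $\varepsilon$.

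Next I would unfold the definitions $F_{c}(x)=F(x/c)$ and $G_{c}(x)=G(x/c)$ and perform the change of variable $y=x/c$, which is legitimate since $c>0$. Writing $x=cy$ and noting $(x\pm\varepsilon)/c=y\pm\varepsilon/c$, the desired inequality $F_{c}(x-\varepsilon)-\varepsilon\le G_{c}(x)\le F_{c}(x+\varepsilon)+\varepsilon$ transforms into
\begin{equation*}
F(y-\varepsilon/c)-\varepsilon\le G(y)\le F(y+\varepsilon/c)+\varepsilon\qquad\text{for all }y\in\mathbb{R}.
\end{equation*}

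The heart of the argument is the elementary observation that $0<c\le1$ forces $\varepsilon/c\ge\varepsilon$, so by monotonicity of the distribution function $F$ we have $F(y-\varepsilon/c)\le F(y-\varepsilon)$ and $F(y+\varepsilon/c)\ge F(y+\varepsilon)$. Substituting these into the inequalities certified above gives
\begin{equation*}
F(y-\varepsilon/c)-\varepsilon\le F(y-\varepsilon)-\varepsilon\le G(y)\le F(y+\varepsilon)+\varepsilon\le F(y+\varepsilon/c)+\varepsilon,
\end{equation*}
which is exactly what was needed. Hence $\rho(F_{c},G_{c})\le\varepsilon$ for every $\varepsilon>\rho(F,G)$, and letting $\varepsilon\downarrow\rho(F,G)$ yields the claim.

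I do not expect a genuine obstacle here: the only point to watch is the direction of the monotonicity inequalities after the dilation, and the fact that the contraction $c\le1$ is precisely what makes the shift $\varepsilon/c$ at least as large as $\varepsilon$ (with $c>1$ the same computation would push the inequalities the wrong way, and the conclusion could fail). The positivity $c>0$ is used both to justify the change of variable and to guarantee that $F_{c}$ is again a nondecreasing distribution function, so that the monotonicity step applies.
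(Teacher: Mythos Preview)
Your proof is correct and follows essentially the same approach as the paper: both arguments rely on the monotonicity of $F$ together with the observation that $0<c\le1$ forces the dilated shift $\varepsilon/c$ (equivalently, $\tfrac{x-\varepsilon}{c}\le\tfrac{x}{c}-\varepsilon$) to be at least $\varepsilon$, so that any $\varepsilon$ certifying the L\'evy inequality for $(F,G)$ automatically certifies it for $(F_{c},G_{c})$. Your presentation via the explicit change of variable $y=x/c$ and the final limiting step $\varepsilon\downarrow\rho(F,G)$ is slightly more detailed than the paper's, but the content is identical.
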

\begin{proof}
	Let $c$ be any positive constant $c\leq1$. Observe that $F_{c}(x-\varepsilon)=F(\frac{x-\varepsilon}{c})\leq F(\frac{x}{c}-\varepsilon)$ and similarly we have that $F_{c}(x+\varepsilon)\geq F(\frac{x}{c}+\varepsilon)$. This implies that if $\varepsilon>0$ satisfies $F(x-\varepsilon)-\varepsilon\leq G(x)\leq F(x+\varepsilon)+\varepsilon\textnormal{ for all $x\in\mathbb{R}$}$, then it also satisfies $F_{c}(x-\varepsilon)-\varepsilon\leq G_{c}(x)\leq F_{c}(x+\varepsilon)+\varepsilon\textnormal{ for all $x\in\mathbb{R}$}$. Then, we have
\begin{equation*}
\rho(F_{c},G_{c})=\inf\left\{\varepsilon>0\,|\,F_{c}(x-\varepsilon)-\varepsilon\leq G_{c}(x)\leq F_{c}(x+\varepsilon)+\varepsilon\textnormal{ for all $x\in\mathbb{R}$} \right\}
\end{equation*}
\begin{equation*}
\leq\inf\left\{\varepsilon>0\,|\,F(x-\varepsilon)-\varepsilon\leq G(x)\leq F(x+\varepsilon)+\varepsilon\textnormal{ for all $x\in\mathbb{R}$} \right\}=\rho(F,G).
\end{equation*}
\end{proof}
Observe that for two real valued random variables $X$ and $Y$ the above lemma affirms that for any $0<c\leq1$ we have that $\rho(cX,cY)\leq\rho(X,Y)$.

Another useful property of the Prokhorov metric is the following. From condition 3) of the section ``L\'{e}vy metric" in \cite{Haz} (page 405) given any probability distributions on $\mathbb{R}$ $F_{1},...,F_{k},G_{1},...,G_{k}$, where $k\in\mathbb{N}$, we have that
\begin{equation}\label{Prok}
\rho(F_{1}*\cdots*F_{k},G_{1}*\cdots*G_{k})\leq \sum_{j=1}^{k}\rho(F_{j},G_{j}).
\end{equation}
For the next two results denote by $S_{n}$ the sequence of bounded sets (\textit{i.e.}~$S_{n}\in\hat{\textbf{S}}$) s.t.~$S_{n}\uparrow S$. Notice that such sequence exists by the definition of $\hat{\textbf{S}}$, see page 19 in \cite{Kallenberg2}.
\begin{pro}\label{pro-Poisson}
Consider an atomless CRM $\alpha$ with corresponding unique pair $(\gamma,F)$. Let $\gamma_{n}(A)=\gamma(S_{n}\cap A)$ and let $F_{n}(C)=F(C\cap(S_{n}\times(\frac{1}{n},\infty)))$, for every $A\in\textbf{S}$, $C\in\textbf{S}\otimes\mathcal{B}((0,\infty))$ and $n\in\mathbb{N}$. Then, $\gamma_{n}$ and $F_{n}$ are finite measures and there exists a sequence of atomless finite CRMs $\alpha_{n}$ with pair $(\gamma_{n},F_{n})$ s.t.~$\alpha_{n}\stackrel{d}{\to}\alpha$. 
\end{pro}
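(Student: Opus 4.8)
The plan is to verify the two finiteness assertions by direct estimates and then to obtain $\alpha_n\stackrel{d}{\to}\alpha$ through the Laplace functionals via Harris's theorem (Theorem \ref{density-T1}). Throughout I work with the representation attached to the pair $(\gamma,F)$, namely that for every $f\in\hat{C}_S$
\[
\mathbb{E}\Big[\exp\Big(-\int_S f\,d\alpha\Big)\Big]=\exp\Big(-\int_S f(s)\,\gamma(ds)-\int_{S\times(0,\infty)}\big(1-e^{-f(s)x}\big)\,F(ds,dx)\Big),
\]
where $\gamma$ is locally finite and $F$ satisfies $\int_{B\times(0,\infty)}(1\wedge x)\,F(ds,dx)<\infty$ for every $B\in\hat{\textbf{S}}$. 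The finiteness of $\gamma_n$ is immediate: since $S_n\in\hat{\textbf{S}}$, $\gamma_n(S)=\gamma(S_n)<\infty$. For $F_n$ I would use that $1\wedge x\geq 1/n$ on $(\tfrac1n,\infty)$, so that
\[
F_n(S\times(0,\infty))=F\big(S_n\times(\tfrac1n,\infty)\big)\leq n\int_{S_n\times(\frac1n,\infty)}(1\wedge x)\,F(ds,dx)\leq n\int_{S_n\times(0,\infty)}(1\wedge x)\,F(ds,dx)<\infty,
\]
the last integral being finite because $S_n$ is bounded. Hence $F_n$ is a finite measure, and it trivially meets the integrability requirement.

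Next I would produce $\alpha_n$. Since $(\gamma_n,F_n)$ is an admissible pair of finite measures, the existence direction of the representation (Kingman) gives a CRM $\alpha_n$ with this pair. Because $F_n$ has finite total mass, the Poisson component of $\alpha_n$ carries a.s.\ finitely many atoms, so $\alpha_n(S)=\gamma_n(S)+\sum_i x_i<\infty$ a.s.; thus $\alpha_n$ is a finite CRM. Atomlessness is inherited from $\alpha$: since $\alpha$ has no fixed atoms, $\gamma$ is diffuse and $F(\{s\}\times(0,\infty))=0$ for every $s$, whence $\gamma_n$ is diffuse and $F_n(\{s\}\times(0,\infty))\leq F(\{s\}\times(0,\infty))=0$, so $\alpha_n$ has no fixed atoms either.

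For the convergence, by Theorem \ref{density-T1} it suffices to show $\mathbb{E}[\exp(-\int f\,d\alpha_n)]\to\mathbb{E}[\exp(-\int f\,d\alpha)]$ for every $f\in\hat{C}_S$ with $f\leq1$. Writing out both Laplace functionals, the drift exponents satisfy $\int_S f\,d\gamma_n=\int_{S_n}f\,d\gamma\uparrow\int_S f\,d\gamma$ by monotone convergence ($f\geq0$, $S_n\uparrow S$), the limit being finite since $f$ has bounded support and $\gamma$ is locally finite. For the jump exponents, $S_n\times(\tfrac1n,\infty)\uparrow S\times(0,\infty)$ and the integrand $1-e^{-f(s)x}\geq0$, so monotone convergence again yields $\int_{S_n\times(\frac1n,\infty)}(1-e^{-f(s)x})F(ds,dx)\uparrow\int_{S\times(0,\infty)}(1-e^{-f(s)x})F(ds,dx)$; using $f\leq1$ one has $1-e^{-f(s)x}\leq 1\wedge x$, and the integrand vanishes off $\mathrm{supp}(f)\times(0,\infty)$ with $\mathrm{supp}(f)$ bounded, so the limit is finite. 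Continuity of $\exp$ then gives convergence of the Laplace functionals, i.e.\ $\alpha_n\stackrel{d}{\to}\alpha$.

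I expect the only genuinely substantive points to be the finiteness of $F_n$ and the finiteness of the limiting jump integral: both rest on combining the local integrability $\int(1\wedge x)\,F<\infty$ with the lower bound $1\wedge x\geq 1/n$ on the truncated region and the bounded support of $f$. Once the Laplace functional form is in hand, the remainder reduces to two clean applications of monotone convergence, so the hard part is really just setting up these estimates correctly rather than any delicate limiting argument.
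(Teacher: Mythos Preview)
Your proof is correct and follows essentially the same route as the paper: verify finiteness of $\gamma_n$ and $F_n$, build $\alpha_n$ from the admissible pair, and establish $\alpha_n\stackrel{d}{\to}\alpha$ by checking convergence of Laplace functionals via Theorem~\ref{density-T1}(iii). The only cosmetic differences are that you make the bound $1\wedge x\ge 1/n$ explicit and invoke monotone convergence directly, whereas the paper writes out the difference of the log-Laplace functionals and lets each of its three pieces tend to zero.
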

\begin{proof}
From Kingman's representation theorem (see \cite{King} and see also Corollary 12.11 in \cite{Kallenberg} and Corollary 3.21 in \cite{Kallenberg2}), we have that every atomless CRM $\alpha$ has the following representation:
\begin{equation}\label{King-alpha}
\alpha=\gamma+\int_{0}^{\infty}\int_{S}x\delta_{s}\eta(ds\, dx),\quad \textnormal{a.s.}
\end{equation}
for some non-random diffuse measure $\gamma\in\mathcal{M}_{S}$ and a Poisson process $\eta$ on $S\times(0,\infty)$ with intensity $F$ satisfying
\begin{equation}\label{Poisson}
\int_{0}^{\infty}(1\wedge x)F(A\times dx)<\infty,
\end{equation}
for every $A\in\hat{\textbf{S}}$. In particular, for every $B\in\textbf{S}$ we have that $\alpha(B)<\infty$ if and only if $\gamma(B)<\infty$ and condition $(\ref{Poisson})$ holds for $B\in\textbf{S}$ (see Corollary 12.11 in \cite{Kallenberg}). Further, notice that the above formulation implies that for every $A\in\textbf{S}$ and $f\in\hat{C}_{S}$
\begin{equation*}
\alpha(A)=\gamma(A)+\int_{0}^{\infty}x\eta(A\times dx)\,\,\,\textnormal{and }\,\,\, \alpha f=\gamma f+\int_{0}^{\infty}\int_{S}xf(s)\eta(ds\, dx),\,\,\, \textnormal{a.s..}
\end{equation*}
Moreover, the unique one to one correspondence between $\alpha$ and $(\gamma, F)$ is shown in Theorem 3.20 of \cite{Kallenberg2}. 

It is possible to see that $\gamma_{n}$ and $F_{n}$ are measures on $\textbf{S}$ and on $\textbf{S}\otimes\mathcal{B}((0,\infty))$, respectively. In particular, since $\alpha(A)<\infty$ for every $A\in\hat{\textbf{S}}$ then $\gamma(S_{n})<\infty$ and 
\begin{equation*}
\int_{0}^{\infty}(1\wedge x)F(S_{n}\times dx)<\infty\Rightarrow F(S_{n}\times (\frac{1}{n},\infty))<\infty,
\end{equation*}
for every $n\in\mathbb{N}$. Thus, $\gamma_{n}$ and $F_{n}$ are finite measures, for every $n\in\mathbb{N}$. 

Now, for every $n\in\mathbb{N}$, let $\eta_{n}$ be a Poisson process on $S\times(0,\infty)$ with intensity $F_{n}$ and let
\begin{equation*}
\alpha_{n}=\gamma_{n}+\int_{0}^{\infty}\int_{S}x\delta_{s}\eta_{n}(ds\, dx).
\end{equation*} 
Then, we have that $\alpha_{n}$ is an atomless CRM and since $\gamma_{n}$ and $F_{n}$ are finite then $\alpha_{n}$ is finite, for every $n\in\mathbb{N}$ (see Corollary 12.11 in \cite{Kallenberg}).
 
Concerning the stated convergence we have the following. From Lemma 12.2 in \cite{Kallenberg} (or from Lemma 3.1 in \cite{Kallenberg2}) we have that for every $f\in\hat{C}_{S}$
\begin{equation*}
-\log\mathbb{E}\left[\exp\left(-\int f(s)\alpha(ds)\right)\right]=\gamma f+\int_{0}^{\infty}\int_{S}1-e^{-x\delta_{s}f}F(ds\, dx).
\end{equation*}
Hence, by assumption we have that for every $f\in\hat{C}_{S}$
\begin{equation*}
-\log\mathbb{E}\left[\exp\left(-\int f(s)\alpha(ds)\right)\right]+\log\mathbb{E}\left[\exp\left(-\int f(s)\alpha_{n}(ds)\right)\right]
\end{equation*}
\begin{equation*}
=\int_{S}f(s)\gamma(ds)+\int_{0}^{\infty}\int_{S}1-e^{-xf(s)}F(ds\, dx)-\int_{S_{n}}f(s)\gamma(ds)+\int_{\frac{1}{n}}^{\infty}\int_{S_{n}}1-e^{-xf(s)}F(ds\, dx)
\end{equation*}
\begin{equation*}
=\int_{S\setminus S_{n}}f(s)\gamma(ds)+\int_{0}^{\frac{1}{n}}\int_{S_{n}}1-e^{-xf(s)}F(ds\, dx)+\int_{0}^{\infty}\int_{S\setminus S_{n}}1-e^{-xf(s)}F(ds\, dx)\to 0,
\end{equation*}
as $n\to\infty$. Then, by point (iii) in Theorem \ref{density-T1} (see also Lemma 4.24 in \cite{Kallenberg2}) we obtain that $\alpha_{n}\stackrel{d}{\to}\alpha$, as $n\to\infty$.
\end{proof}
Now, let us denote by $\mathcal{I}$ the set of all CRMs on $S$ (considered as random elements in $\mathcal{M}_{S}$ endowed with the vague topology) and recall that $\mathbb{Z}_{+}=\mathbb{N}\cup \{0\}$. From Theorem 7.1 in \cite{Kallenberg0} we know that an element $\xi$ of $\mathcal{I}$ has the following representation
\begin{equation*}
\xi\stackrel{a.s.}{=}\alpha+\sum_{j=1}^{K}\beta_{j}\delta_{s_{j}}
\end{equation*}
with $K\in\mathbb{Z}_{+}\cup \{\infty\}$, where $\{s_{j}:j\geq1\}$ is the set of fixed atoms of $\xi$ in $S$, $\alpha$ is an atomless CRM, and $\beta_{j}$, $j\geq1$, are $\mathbb{R}_{+}$-valued random variables, which are mutually independent and independent of $\alpha$. We call $\sum_{j=1}^{K}\beta_{j}\delta_{s_{j}}$ the fixed component of $\xi$. We remark that in the Kingman's representation $\alpha$ is the sum of a deterministic and a ordinary component as shown in the proof of Proposition \ref{pro-Poisson} in eq.~(\ref{King-alpha}).
 
Consider the following class of QID random measures:
\begin{equation*}
\mathcal{A}:=\bigg\{\xi\in\mathcal{I}\bigg|\xi\stackrel{a.s.}{=}\alpha+\sum_{j=1}^{K}\beta_{j}\delta_{s_{j}},\textnormal{with $\alpha$ an atomless CRM with finite L\'{e}vy measure, $\{s_{j}:j=1,...,K\}$}  
\end{equation*}
\begin{equation*}
\textnormal{a finite set of fixed atoms in $S$, and $\beta_{j}$, $j\geq1$, $\mathbb{R}_{+}$-valued QID random variables with finite quasi-L\'{e}vy}
\end{equation*}
\begin{equation*}
\textnormal{measure and zero Gaussian variance and that are mutually independent and independent of $\alpha$}\bigg\}.
\end{equation*}
First, notice that $\alpha$ is ID, because any atomless random measure with independent increments is ID. Second, observe that, in contrast with the usual representation of CRMs, the elements of $\mathcal{A}$ have that the atomless random measure $\alpha$ has finite L\'{e}vy measure (thus, $\alpha$ is finite), that the number of fixed atoms $K$ is finite, and that $\beta_{j}$, $j=1,...,K$, $\mathbb{R}_{+}$-valued QID random variables with finite quasi-L\'{e}vy measure and zero Gaussian variance. Notice that the elements of $\mathcal{A}$ are almost surely finite on $\textbf{S}$. Thus, $\mathcal{A}$ is strictly smaller than the class of QID CRMs, which in turn is strictly smaller than the class of all CRMs (namely $\mathcal{I}$).

We are ready to present the main result of this section.
\begin{thm}\label{density-T2}$\mathcal{A}$ is dense in the space of all CRMs with respect to the convergence in distribution.
\end{thm}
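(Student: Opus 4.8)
The plan is to start from the general structural representation of an arbitrary CRM and approximate each of its two pieces separately, assembling the approximants into elements of $\mathcal{A}$ and then verifying vague convergence in distribution through Theorem \ref{density-T1}(ii). Concretely, given $\xi\in\mathcal{I}$ I would first invoke Theorem 7.1 in \cite{Kallenberg0} to write $\xi\stackrel{a.s.}{=}\alpha+\sum_{j=1}^{K}\beta_{j}\delta_{s_{j}}$ with $\alpha$ atomless and the $\beta_{j}$ mutually independent and independent of $\alpha$. For the atomless part I would take the sequence $\alpha_{n}$ supplied by Proposition \ref{pro-Poisson}, which are atomless CRMs with finite L\'{e}vy measure satisfying $\alpha_{n}\stackrel{d}{\to}\alpha$. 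For the atomic part I would truncate the (possibly infinite) sum at an index $K_{n}\uparrow\infty$ and replace each mass $\beta_{j}$, which is $\mathbb{R}_{+}$-valued, by a QID random variable $\beta_{j}^{(n)}$ with support in $[0,\infty)$, finite quasi-L\'{e}vy measure and zero Gaussian variance; the existence of such $\beta_{j}^{(n)}$ approximating $\beta_{j}$ arbitrarily well in the L\'{e}vy metric is exactly Theorem \ref{pro8} applied to $A=[0,\infty)$, since the L\'{e}vy metric metrizes weak convergence. The crucial quantitative choice is to demand $\rho(\mathcal{L}(\beta_{j}^{(n)}),\mathcal{L}(\beta_{j}))\le (n2^{j})^{-1}$ for every $j\le K_{n}$, made independently across $j$ and of $\alpha_{n}$. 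Setting $\xi_{n}:=\alpha_{n}+\sum_{j=1}^{K_{n}}\beta_{j}^{(n)}\delta_{s_{j}}$ then produces a sequence in $\mathcal{A}$.

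It then remains to show $\xi_{n}\stackrel{vd}{\to}\xi$, for which by Theorem \ref{density-T1}(ii) it suffices to prove $\int f\,d\xi_{n}\stackrel{d}{\to}\int f\,d\xi$ for every $f\in\hat{C}_{S}$. Since $\int f\,d\xi_{n}=M\int (f/M)\,d\xi_{n}$ with $M=\sup f$, and convergence in distribution on $\mathbb{R}$ is preserved by the deterministic scaling $x\mapsto Mx$, I may assume without loss of generality that $f\le 1$. Fixing such an $f$, with bounded support contained in some $S_{N}$, only atoms lying in $S_{N}$ contribute, and writing $\int f\,d\xi=\int f\,d\alpha+\sum_{j:\,s_{j}\in S_{N}}f(s_{j})\beta_{j}$ together with the analogous expression for $\xi_{n}$, I would bound the L\'{e}vy distance between the two laws via the triangle inequality and the convolution estimate (\ref{Prok}), splitting the total error into three pieces: (a) the atomless error $\rho(\mathcal{L}(\int f\,d\alpha_{n}),\mathcal{L}(\int f\,d\alpha))$, which tends to $0$ because $\alpha_{n}\stackrel{d}{\to}\alpha$ forces $\int f\,d\alpha_{n}\stackrel{d}{\to}\int f\,d\alpha$ by Theorem \ref{density-T1}; (b) a truncation error between $\sum_{j\le K_{n},\,s_{j}\in S_{N}}f(s_{j})\beta_{j}$ and $\sum_{j:\,s_{j}\in S_{N}}f(s_{j})\beta_{j}$; and (c) a replacement error between $\sum_{j\le K_{n}}f(s_{j})\beta_{j}^{(n)}$ and $\sum_{j\le K_{n}}f(s_{j})\beta_{j}$.

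For the truncation error (b) I would use that $\xi$ is locally finite, so that $\sum_{j:\,s_{j}\in S_{N}}\beta_{j}\le\xi(S_{N})<\infty$ almost surely; hence the tail $\sum_{j>K_{n},\,s_{j}\in S_{N}}\beta_{j}\to 0$ a.s.\ as $K_{n}\to\infty$, and since $0\le f\le 1$ the truncated sum converges a.s., and therefore in distribution, to the full sum. For the replacement error (c), both sums run over the same finite index set $\{j\le K_{n}\}$, so the estimate (\ref{Prok}) bounds it by $\sum_{j\le K_{n}}\rho(\mathcal{L}(f(s_{j})\beta_{j}^{(n)}),\mathcal{L}(f(s_{j})\beta_{j}))$, and because $0\le f(s_{j})\le 1$ the scaling Lemma \ref{lemmaProk} lets me strip the factor $f(s_{j})$ to obtain the further bound $\sum_{j\le K_{n}}\rho(\mathcal{L}(\beta_{j}^{(n)}),\mathcal{L}(\beta_{j}))\le\sum_{j\ge 1}(n2^{j})^{-1}=n^{-1}\to0$. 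Combining (a)--(c) yields $\rho(\mathcal{L}(\int f\,d\xi_{n}),\mathcal{L}(\int f\,d\xi))\to 0$, which is the desired convergence.

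The main obstacle I anticipate lies in the atomic part, precisely in controlling truncation and QID-replacement simultaneously and uniformly over all test functions: a single sequence $\xi_{n}\in\mathcal{A}$ must work for every $f\in\hat{C}_{S}$ at once, so the per-atom approximation errors cannot be tuned to a particular $f$. The summability device $\rho(\mathcal{L}(\beta_{j}^{(n)}),\mathcal{L}(\beta_{j}))\le(n2^{j})^{-1}$, combined with Lemma \ref{lemmaProk} and with the a.s.\ finiteness forced by local finiteness of $\xi$, is exactly what decouples these two limiting operations and renders the bound uniform; verifying that $\xi_{n}$ indeed lies in $\mathcal{A}$ (finitely many atoms, finite L\'{e}vy measure for $\alpha_{n}$, and $\mathbb{R}_{+}$-valued QID masses with zero Gaussian variance) is then immediate from the constructions above.
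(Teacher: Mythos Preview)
Your proposal is correct and follows essentially the same route as the paper: Kingman/Kallenberg decomposition, Proposition~\ref{pro-Poisson} for the atomless part, Theorem~\ref{pro8} for the atomic weights, and then the L\'{e}vy-metric argument via (\ref{Prok}) and Lemma~\ref{lemmaProk} after reducing to $f\le 1$. The only cosmetic differences are that the paper uses the bound $\rho(\beta'_{n,j},\beta_{j})<1/n^{2}$ with $K_{n}=n$ rather than your $(n2^{j})^{-1}$, and it spells out the product-probability-space construction that makes the phrase ``made independently across $j$ and of $\alpha_{n}$'' rigorous.
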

\begin{proof}
From Theorem 7.1 in \cite{Kallenberg0} we know that any CRM has the following unique representation
\begin{equation}\label{measure1}
\xi\stackrel{a.s.}{=}\alpha+\sum_{j=1}^{K}\beta_{j}\delta_{s_{j}}
\end{equation}
with $K\leq\infty$, where $\{s_{j}:j\geq1\}$ is the set of fixed atoms of $\xi$, $\alpha$ is a random measure without fixed atoms with independent increments (hence, $\alpha$ is an atomless ID random measure), and $\beta_{j}$, $j\geq1$, are $\mathbb{R}_{+}$-valued random variables, which are mutually independent and independent of $\alpha$.

From Theorem \ref{pro8} with $A=[0,\infty)$, we know that for each $\beta_{j}$ there exists a sequence of non-negative QID random variable with zero Gaussian variance and finite L\'{e}vy measure that converges in distribution to $\beta_{j}$, for every $j\in\mathbb{N}$. Denote by $\beta_{n,j}$ such a sequence.

Denote by $S_{n}$ the sequence of bounded sets s.t.~$S_{n}\uparrow S$ and by $(\gamma,F)$ be the pair associated to $\alpha$. Let $\gamma_{n}(A)=\gamma(S_{n}\cap A)$ and  $F_{n}(C)=F(C\cap(S_{n}\times(\frac{1}{n},\infty)))$, for every $A\in\textbf{S}$, $C\in\textbf{S}\otimes\mathcal{B}((0,\infty))$ and $n\in\mathbb{N}$, as in Proposition \ref{pro-Poisson}. Then, by Proposition \ref{pro-Poisson} there exists a sequence of finite CRMs $\alpha_{n}$ with pair $(\gamma_{n},F_{n})$ s.t.~$\alpha_{n}\stackrel{d}{\to}\alpha$.

The first step is to show the existence of random measures $\xi_{n}\in\mathcal{A}$ with ID random measure equals in distribution to $\alpha_{n}$, with fixed atoms in $\{s_{j}:j\geq1\}$ and weights equal in distributions to $\beta_{n,j}$. The existence is not immediate because we do not know whether the $\beta_{n,j}$ are mutually independent and independent of $\alpha_{n}$ in the underlying probability space of $\xi$. This is a classical problem in probability and the solution lies in the construction of a probability space under which these conditions are satisfied, which is given by the `product' of the probability spaces.

For the sake of clarity and completeness let us write here the arguments. Fix $n\in\mathbb{N}$. Denote the underlying probability spaces of $\alpha_{n}$ by $(\Omega,\mathcal{F},\mathbb{P})$ and of the random variable $\beta_{n,j}$ by $(\Omega_{j},\mathcal{F}_{j},\mathbb{P}_{j})$, for $j=1,...,n$. Consider the probability space $(\Omega',\mathcal{F}',\mathbb{P}')$ where $\Omega'=\Omega\times\Omega_{1}\times\cdots\times\Omega_{n}$, $\mathcal{F}'=\mathcal{F}\otimes\mathcal{F}_{1}\otimes\cdots\otimes \mathcal{F}_{n}$ and $\mathbb{P}'$ is the product probability measure of $\mathbb{P},\mathbb{P}_{1}$,...,$\mathbb{P}_{n}$.

Let $\alpha_{n}'(\cdot)(\omega,\omega_{1},...,\omega_{n}):=\alpha_{n}(\cdot)(\omega)$ and let $\beta'_{n,j}(\omega,\omega_{1},...,\omega_{n}):=\beta_{n,j}(\omega_{j})$, where $j=1,...,n$, for every $(\omega,\omega_{1},...,\omega_{n})\in\Omega'$. Observe that for every $B_{1},...,B_{k}\in\textbf{S}$ and $x_{1},...,x_{k},x^{(1)}_{1},...,x^{(1)}_{k},...,x^{(n)}_{1},...,x^{(n)}_{k}\in\mathbb{R}_{+}$ we have that 
\begin{equation*}
\mathbb{P}'\Big(\alpha_{n}'(B_{1})<x_{1},...,\alpha_{n}'(B_{k})<x_{k},\delta_{s_{1}}(B_{1})\beta'_{n,1}<x^{(1)}_{1},...,\delta_{s_{1}}(B_{k})\beta'_{n,1}<x^{(1)}_{k},
\end{equation*}
\begin{equation*}
..., \delta_{s_{n}}(B_{1})\beta'_{n,n}<x^{(n)}_{1},...,\delta_{s_{n}}(B_{k})\beta'_{n,n}<x^{(n)}_{k}\Big)=\mathbb{P}\Big(\alpha(B_{1})<x_{1},...,\alpha(B_{k})<x_{k}\Big)
\end{equation*}
\begin{equation*}
\mathbb{P}_{1}\Big(\delta_{s_{1}}(B_{1})\beta'_{n,1}<x^{(1)}_{1},...,\delta_{s_{1}}(B_{k})\beta'_{n,1}<x^{(1)}_{k} \Big)
\cdots\mathbb{P}_{n}\Big(\delta_{s_{n}}(B_{1})\beta'_{n,n}<x^{(n)}_{1},...,\delta_{s_{n}}(B_{k})\beta'_{n,n}<x^{(n)}_{k} \Big)
\end{equation*}
Now, let 
\begin{equation}\label{Indi}
\xi_{n}(\cdot)(\omega'):=\alpha_{n}'(\cdot)(\omega')+\sum_{j=1}^{n}\beta'_{n,j}(\omega')\delta_{s_{j}}(\cdot),\quad\forall\omega'\in\Omega',
\end{equation}
where $s_{1}$,..., $s_{n}$ are the same as the ones in (\ref{measure1}). It is possible to see that, for every $\omega'\in\Omega'$, $\xi_{n}(\cdot)(\omega')$ is a measure because it is the sum of measures and that, for every $B\in\textbf{S}$, $\xi_{n}(B)(\cdot)$ is a measurable function because it is the sum of measurable functions. Thus, $\xi_{n}$ is a random measure on $S$ and from its definition it is possible to see that it belongs to $\mathcal{A}$. 

Since $\beta_{n,j}\stackrel{d}{\rightarrow}\beta_{j}$ we can choose a subsequence of $\beta_{n,j}$, which by abuse of notation we denote it by $\beta_{n,j}$, such that $\rho(\beta_{n,j},\beta_{j})<\frac{1}{n^{2}}$ for every $j=1,...,n$ and  $n\in\mathbb{N}$. From the above arguments there exists a sequence of random measures in $\mathcal{A}$ (with possibly different underlying probability spaces) such that $\xi_{n}=\alpha_{n}'+\sum_{j=1}^{n}\beta'_{n,j}\delta_{s_{j}}$. Thus, using that $\beta'_{n,j}\stackrel{d}{=}\beta_{n,j}$ we obtain that $\rho(\beta'_{n,j},\beta_{j})<\frac{1}{n^{2}}$ for every $j=1,...,n$ and  $n\in\mathbb{N}$.

Now, we need to show that $\xi_{n}\stackrel{vd}{\rightarrow}\xi$. From Theorem \ref{density-T1}, it is sufficient to show that $\int f(x)\xi_{n}(dx)\stackrel{d}{\rightarrow}\int f(x)\xi(dx)$ for all $f\in\hat{C}_{S}$. Since $\alpha'_{n}\stackrel{d}{=}\alpha_{n}$ for every $n\in\mathbb{N}$ and $\alpha_{n}\stackrel{d}{\to}\alpha$ for every $\omega\in\Omega$ then $\alpha'_{n}\stackrel{d}{\to}\alpha$. Further, since $\alpha'_{n}$ and $\alpha$ are independent of the corresponding fixed component, this reduces the goal to prove that $\sum_{j=1}^{n} f(s_{j})\beta_{n,j}'\stackrel{d}{\rightarrow}\sum_{j=1}^{\infty} f(s_{j})\beta_{j}$ for all $f\in\hat{C}_{S}$. 

Let $f\in\hat{C}_{S}$, hence, $f$ is bounded and has bounded support, and by denoting $B$ the support of $f$ we have that $B\in\hat{\textbf{S}}$ and so that almost surely $\xi_{n}(B)<\infty$, $n\in\mathbb{N}$, and $\xi(B)<\infty$. Thus, for each $n\in\mathbb{N}$, $\sum_{j=1}^{n} f(s_{j})\beta_{n,j}'<\infty$ a.s.~and $\sum_{j=1}^{\infty} f(s_{j})\beta_{j}<\infty$ a.s..

Moreover, notice that it is sufficient to prove the result for any $f\in \hat{C}_{S}$ with $f(s)\leq1$ for every $s\in S$. Indeed, consider any $f\in\hat{C}_{S}$ and let $\bar{C}\in\mathbb{R}_{+}$ be its bound, then $\sum_{j=1}^{n} f(s_{j})\beta_{n,j}'=\bar{C}\sum_{j=1}^{n} \frac{f(s_{j})}{\bar{C}}\beta_{n,j}'$ and so if $\sum_{j=1}^{n} \frac{f(s_{j})}{\bar{C}}\beta_{n,j}'\stackrel{d}{\rightarrow}\sum_{j=1}^{\infty} \frac{f(s_{j})}{\bar{C}}\beta_{j}$ then $\sum_{j=1}^{n} f(s_{j})\beta_{n,j}'\stackrel{d}{\rightarrow}\sum_{j=1}^{\infty} f(s_{j})\beta_{j}$.

Now, consider any $f\in \hat{C}_{S}$ with $f(s)\leq1$ for every $s\in S$. By the triangular inequality we have that 
\begin{equation*}
\rho\left(\sum_{j=1}^{n}f(s_{j})\beta_{n,j}',\sum_{j=1}^{\infty}f(s_{j})\beta_{j}\right)
\leq \rho\left(\sum_{j=1}^{n}f(s_{j})\beta_{n,j}',\sum_{j=1}^{n}f(s_{j})\beta_{j}\right)+\rho\left(\sum_{j=1}^{n}f(s_{j})\beta_{j},\sum_{j=1}^{\infty}f(s_{j})\beta_{j}\right).
\end{equation*}
The last element converges to zero as $n\rightarrow\infty$ because $\sum_{j=1}^{n}f(s_{j})\beta_{j}\stackrel{a.s.}{\rightarrow}\sum_{j=1}^{\infty}f(s_{j})\beta_{j}$ as $n\rightarrow\infty$. For the other element, by (\ref{Prok}) and by Lemma \ref{lemmaProk} we obtain that
\begin{equation*}
\rho\left(\sum_{j=1}^{n}f(s_{j})\beta_{n,j}',\sum_{j=1}^{n}f(s_{j})\beta_{j}\right)\leq \sum_{j=1}^{n}\rho\left(f(s_{j})\beta_{n,j}',f(s_{j})\beta_{j}\right)\leq \sum_{j=1}^{n}\rho\left(\beta_{n,j}',\beta_{j}\right)<\frac{1}{n}.
\end{equation*}
Thus, we have that $\sum_{j=1}^{n} f(s_{j})\beta_{n,j}'\stackrel{d}{\rightarrow}\sum_{j=1}^{\infty} f(s_{j})\beta_{j}$ as $n\rightarrow\infty$, which concludes the proof.
\end{proof}
\begin{rem}
	\textnormal{We could alternatively consider an almost sure equality in (\ref{Indi}) and then use the existence and uniqueness results for random measures (see Theorem 2.15 and Corollary 2.16 in \cite{Kallenberg}) to obtain a random measure almost surely equal to $\xi_{n}$. In addition, by the Kolmogorov extension theorem the same arguments of the first part of the above proof hold for the case of $n$ `equal' to infinity, namely $\xi_{n}=\alpha_{n}'+\sum_{j=1}^{\infty}\beta'_{n,j}$.
	\\
	Further, we point out that if $\xi$ is such that the number of fixed atoms in any bounded set (\textit{i.e.}~in any $B\in\hat{\textbf{S}}$) is finite then the number of fixed atoms in the support of every $f\in\hat{C}_{S}$ is finite, namely $\{s_{j}:j\geq1\}\cap\textnormal{supp}(f)$ has finite cardinality, and so the stated result follows directly from the mutual independence of the $\beta_{n,j}'$, $j=1,...,n$, from the fact that $\beta_{n,j}'\stackrel{d}{\rightarrow}\beta_{j}$ as $n\rightarrow\infty$, for every $j=1,...,n$ and $n\in\mathbb{N}$, and from the continuous mapping theorem.}
\end{rem}
\begin{rem}\label{rem-alpha-infinity}
	\textnormal{Let $\mathcal{A}_{\infty}$ be a class of random measures like $\mathcal{A}$, but such that the ID component is not necessarily finite, \textit{i.e.}~the `$\alpha$' is not necessarily finite. Then, trivially $\mathcal{A}_{\infty}$ is dense in $\mathcal{I}$ w.r.t.~the convergence in distribution. Indeed, let $\xi=\alpha+\sum_{j=1}^{K}\beta_{j}\delta_{s_{j}}$ be any CRM on $S$. If we know the ID component of $\xi$, \textit{i.e.}~$\alpha$, and for modelling/theoretical reasons we can take an approximating sequence of \textit{unbounded} $\xi_{n}$, then we can define the $\xi_{n}$ s.t.~$\xi_{n}(\cdot)(\omega'):=\tilde{\alpha}_{n}'(\cdot)(\omega')+\sum_{j=1}^{n}\beta'_{n,j}(\omega')\delta_{s_{j}}(\cdot),$ $\forall\omega'\in\Omega'$, where $\tilde{\alpha}_{n}'(\cdot)(\omega,\omega_{1},...,\omega_{n}):=\alpha(\cdot)(\omega)$. Then, $\xi_{n}\in\mathcal{A}_{\infty}$ and from the arguments of the proof of Theorem \ref{density-T2} it is possible to see that $\xi_{n}\stackrel{d}{\to}\xi$.}
\end{rem}
It is possible to consider also the set of bounded measures, denoted by $\hat{\mathcal{M}}_{S}$, which can be endowed with the vague topology, as for $\mathcal{M}_{S}$, but also with the weak topology. The weak topology on $\hat{\mathcal{M}}_{S}$ is the topology generated by the integration maps $\pi_{f}$ for all bounded continuous functions. Then, for random measures $\xi,\xi_{1},\xi_{2},...$ considered as random elements in $\hat{\mathcal{M}}_{S}$, endowed with the weak topology, we will denote by $\xi_{n}\stackrel{wd}{\rightarrow}\xi$ the convergence in distribution. Observe that in this setting a QID random measures as defined in Definition are QID random measures on $(S,\textbf{S})$ (hence we do not need to extend them) because for every $B\in\textbf{S}$ they are all a.s.~bounded.

We will use the following result of Kallenberg to prove our next result.
\begin{thm}[see Theorem 4.19 in \cite{Kallenberg2}]\label{density-T"}
	Let $\xi,\xi_{1},\xi_{2},...$ be a.s.~bounded random measures on $S$. Then these conditions are equivalent
	\\\textnormal{(i)} $\xi_{n}\stackrel{wd}{\rightarrow}\xi$,
	\\\textnormal{(ii)} $\xi_{n}\stackrel{vd}{\rightarrow}\xi$, and $\xi_{n}(S)\stackrel{d}{\rightarrow}\xi(S)$.
\end{thm}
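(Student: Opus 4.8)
The plan is to treat the two implications separately, the forward one being routine and the reverse one carrying the real content.

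For (i) $\Rightarrow$ (ii) I would first note that every $f\in\hat{C}_{S}$ is bounded and continuous, so each integration map $\pi_{f}$ is continuous for the weak topology on $\hat{\mathcal{M}}_{S}$; hence the weak topology is finer than the vague one, and $\xi_{n}\stackrel{wd}{\rightarrow}\xi$ immediately yields $\xi_{n}\stackrel{vd}{\rightarrow}\xi$. Moreover, since the constant function $1$ is bounded and continuous, the total-mass map $\mu\mapsto\mu(S)=\pi_{1}(\mu)$ is weakly continuous, so the continuous mapping theorem gives $\xi_{n}(S)\stackrel{d}{\rightarrow}\xi(S)$.

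For the substantial implication (ii) $\Rightarrow$ (i) I would first isolate the deterministic lemma that for bounded measures $\mu_{n}\stackrel{v}{\rightarrow}\mu$ together with $\mu_{n}(S)\rightarrow\mu(S)$ forces $\mu_{n}\stackrel{w}{\rightarrow}\mu$: one splits any bounded continuous $f$ as $fg_{k}+f(1-g_{k})$ with $g_{k}\in\hat{C}_{S}$, $0\leq g_{k}\leq1$, $g_{k}\uparrow1$, handles $\int fg_{k}\,d\mu_{n}$ by vague convergence for fixed $k$, and bounds $\int f(1-g_{k})\,d\mu_{n}$ by $\sup f\,(\mu_{n}(S)-\int g_{k}\,d\mu_{n})$, whose $\limsup_{n}$ is $\sup f\,(\mu(S)-\int g_{k}\,d\mu)\to0$ as $k\to\infty$. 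The strategy is then to lift this to the distributional level through the Skorokhod representation theorem, which is available because $\mathcal{M}_{S}$ and hence $(\hat{\mathcal{M}}_{S},v)\times\mathbb{R}_{+}$ are Polish. The key intermediate step is to upgrade the two separate convergences in (ii) to the \emph{joint} convergence $(\xi_{n},\xi_{n}(S))\stackrel{d}{\rightarrow}(\xi,\xi(S))$. Since $\xi_{n}(S)\stackrel{d}{\rightarrow}\xi(S)$ the total masses are tight, so the pairs are tight and, by Prokhorov, relatively compact in distribution; to identify a subsequential limit $(\eta,R)$ I would use Skorokhod to realise $\xi_{n'}\stackrel{v}{\rightarrow}\eta$ and $\xi_{n'}(S)\rightarrow R$ almost surely, whence lower semicontinuity of total mass under vague convergence gives $\eta(S)\leq\liminf_{n'}\xi_{n'}(S)=R$ a.s., while the marginals force $\eta\stackrel{d}{=}\xi$ and $R\stackrel{d}{=}\xi(S)\stackrel{d}{=}\eta(S)$. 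An almost sure inequality between identically distributed real random variables is an equality, so $R=\eta(S)$ a.s., every subsequential limit has the law of $(\xi,\xi(S))$, and the joint convergence follows. Applying Skorokhod once more to this joint convergence yields realisations with $\tilde{\xi}_{n}\stackrel{v}{\rightarrow}\tilde{\xi}$ and $\tilde{\xi}_{n}(S)\rightarrow\tilde{\xi}(S)$ almost surely (the identity of the second coordinate with the total mass of the first being preserved from the prelimit pairs), and the deterministic lemma upgrades this to $\tilde{\xi}_{n}\stackrel{w}{\rightarrow}\tilde{\xi}$ a.s., giving $\xi_{n}\stackrel{wd}{\rightarrow}\xi$.

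I expect the main obstacle to be exactly the identification of the joint limit, that is, ruling out that mass escapes ``to infinity'' in the passage from vague to weak convergence. Vague convergence alone permits such escape, and it is only the total-mass hypothesis that forbids it; the semicontinuity-plus-equal-law argument above is where this is made precise. As an alternative to the joint-convergence route, one can instead prove weak tightness of $\{\xi_{n}\}$ directly, controlling the tail mass $\xi_{n}(S)-\int g_{k}\,d\xi_{n}$ uniformly in $n$ by comparing the Laplace-functional convergence supplied by Theorem \ref{density-T1} with that coming from $\xi_{n}(S)\stackrel{d}{\rightarrow}\xi(S)$; this reaches the same conclusion but through a more computational tail estimate.
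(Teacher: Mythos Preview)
The paper does not supply its own proof of this statement: it is quoted verbatim as Theorem 4.19 from Kallenberg's monograph \cite{Kallenberg2} and used as a black box in the proof of Theorem~\ref{theorem-bounded}. So there is no in-paper argument to compare against.

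That said, your proposal is a correct and standard way to establish the result. The forward implication is as routine as you say. For the reverse implication, the essential difficulty is exactly the one you identify: vague convergence allows mass to escape, and one must use the total-mass hypothesis to rule this out. Your route---tightness of the pairs, Prokhorov, identifying any subsequential limit $(\eta,R)$ via the lower-semicontinuity inequality $\eta(S)\le R$ combined with the equality in law $\eta(S)\stackrel{d}{=}R$, then Skorokhod plus the deterministic lemma---is clean and complete. Two minor remarks: for the Skorokhod step it is safest to regard the pairs as random elements of the Polish space $\mathcal{M}_{S}\times[0,\infty]$ rather than $(\hat{\mathcal{M}}_{S},v)\times\mathbb{R}_{+}$, since $\hat{\mathcal{M}}_{S}$ is only an $F_{\sigma}$ subset of $\mathcal{M}_{S}$; and the claim that the second coordinate of the Skorokhod realisation coincides a.s.\ with the total mass of the first is indeed justified, as you note, because the joint law is supported on the graph $\{(\mu,r):r=\mu(S)\}$.
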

We are now ready to present our next result, which is similar to Theorem \ref{density-T2}, but applies to $\hat{\mathcal{M}}_{S}$ and involves both the vague and the weak topology.
\begin{thm}\label{theorem-bounded}$\mathcal{A}$ is dense in the space of all CRMs, considered as random elements in $\hat{\mathcal{M}}_{S}$, endowed with either the vague topology or the weak topology, with respect to the convergence in distribution.
\end{thm}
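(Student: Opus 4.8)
The plan is to deduce the statement from Theorem \ref{density-T2} together with Kallenberg's equivalence recorded in Theorem \ref{density-T"}. Since every element of $\mathcal{A}$ is a.s.~finite it lies in $\hat{\mathcal{M}}_{S}$, and since the vague topology on $\hat{\mathcal{M}}_{S}$ is just the restriction of the one on $\mathcal{M}_{S}$ (both being generated by the maps $\pi_{f}$, $f\in\hat{C}_{S}$), the density with respect to the vague topology is already contained in Theorem \ref{density-T2}: for any a.s.~bounded CRM $\xi$ the sequence $\xi_{n}\in\mathcal{A}$ built there satisfies $\xi_{n}\stackrel{vd}{\rightarrow}\xi$. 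Hence only the weak topology requires work, and by Theorem \ref{density-T"} it suffices to show, for this same sequence, the additional total mass convergence $\xi_{n}(S)\stackrel{d}{\rightarrow}\xi(S)$.

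To this end I would decompose the total mass according to (\ref{measure1}) and (\ref{Indi}), writing $\xi(S)=\alpha(S)+\sum_{j=1}^{K}\beta_{j}$ and $\xi_{n}(S)=\alpha_{n}'(S)+\sum_{j=1}^{n}\beta_{n,j}'$. In both expressions the atomless part is independent of the fixed component, so it is enough to prove $\alpha_{n}'(S)\stackrel{d}{\rightarrow}\alpha(S)$ and $\sum_{j=1}^{n}\beta_{n,j}'\stackrel{d}{\rightarrow}\sum_{j=1}^{\infty}\beta_{j}$ separately and then add the two independent limits. The hypothesis $\xi\in\hat{\mathcal{M}}_{S}$ a.s.~is used here in an essential way: it forces $\alpha(S)<\infty$ a.s.~and $\sum_{j=1}^{K}\beta_{j}\leq\xi(S)<\infty$ a.s., so both limits are genuine (non-defective) random variables. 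The fixed-component convergence is handled exactly as in the proof of Theorem \ref{density-T2}, now with the weights all equal to $1$: by the triangle inequality and (\ref{Prok}),
\begin{equation*}
\rho\left(\sum_{j=1}^{n}\beta_{n,j}',\sum_{j=1}^{\infty}\beta_{j}\right)\leq\sum_{j=1}^{n}\rho\left(\beta_{n,j}',\beta_{j}\right)+\rho\left(\sum_{j=1}^{n}\beta_{j},\sum_{j=1}^{\infty}\beta_{j}\right)<\frac{1}{n}+\rho\left(\sum_{j=1}^{n}\beta_{j},\sum_{j=1}^{\infty}\beta_{j}\right),
\end{equation*}
and the last term vanishes since $\sum_{j=1}^{n}\beta_{j}\stackrel{a.s.}{\rightarrow}\sum_{j=1}^{\infty}\beta_{j}<\infty$.

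The atomless part is the real obstacle, and it is precisely where the weak topology differs from the vague one: the functional $\xi\mapsto\xi(S)$ corresponds to integrating against the constant $1\notin\hat{C}_{S}$, so neither Theorem \ref{density-T1} nor Proposition \ref{pro-Poisson} applies directly. I would instead work with the Laplace transform of the total mass. Using the representation (\ref{King-alpha}) and Lemma 12.2 in \cite{Kallenberg} with the constant function $\theta$, for every $\theta\geq0$,
\begin{equation*}
-\log\mathbb{E}\left[e^{-\theta\alpha(S)}\right]=\theta\gamma(S)+\int_{0}^{\infty}\int_{S}\left(1-e^{-\theta x}\right)F(ds\,dx),
\end{equation*}
and similarly for $\alpha_{n}(S)$ with $(\gamma_{n},F_{n})$. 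Subtracting, the difference splits exactly as in Proposition \ref{pro-Poisson} into $\theta\gamma(S\setminus S_{n})$, $\int_{0}^{1/n}\int_{S_{n}}(1-e^{-\theta x})F(ds\,dx)$ and $\int_{0}^{\infty}\int_{S\setminus S_{n}}(1-e^{-\theta x})F(ds\,dx)$. Each tends to $0$: the first because $\gamma(S)<\infty$ and $S_{n}\uparrow S$, the other two by dominated convergence, using the bound $1-e^{-\theta x}\leq\max(1,\theta)(1\wedge x)$ and the finiteness of $\int_{0}^{\infty}(1\wedge x)F(S\times dx)$ guaranteed by $\alpha(S)<\infty$. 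Thus $\mathbb{E}[e^{-\theta\alpha_{n}(S)}]\rightarrow\mathbb{E}[e^{-\theta\alpha(S)}]$ for all $\theta\geq0$, and the continuity theorem for Laplace transforms of nonnegative variables yields $\alpha_{n}(S)\stackrel{d}{\rightarrow}\alpha(S)$, whence $\alpha_{n}'(S)\stackrel{d}{\rightarrow}\alpha(S)$ since $\alpha_{n}'\stackrel{d}{=}\alpha_{n}$. Combining the two independent pieces gives $\xi_{n}(S)\stackrel{d}{\rightarrow}\xi(S)$, and Theorem \ref{density-T"} then delivers $\xi_{n}\stackrel{wd}{\rightarrow}\xi$, completing the proof. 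I expect the control of these truncation errors in the total mass to be the main difficulty, since it is exactly the point at which the compactly supported test functions used throughout Section \ref{Sec-Atomless} no longer suffice and the a.s.~finiteness of $\xi$ must be exploited to tame both $\gamma(S)$ and the small-jump L\'{e}vy mass.
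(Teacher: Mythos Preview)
Your proof is correct and follows the same strategy as the paper: reduce the weak-topology case to $\xi_{n}(S)\stackrel{d}{\rightarrow}\xi(S)$ via Theorem \ref{density-T"}, then split into the atomless and fixed pieces and treat each as in Theorem \ref{density-T2}. The paper handles this in one line by saying ``consider $f\equiv1$'' and invoking the arguments of Theorem \ref{density-T2}; you instead spell out the Laplace-transform computation for $\alpha_{n}(S)\stackrel{d}{\rightarrow}\alpha(S)$, which is exactly what that one line means once one notes that $f\equiv1\notin\hat{C}_{S}$ but the calculation of Proposition \ref{pro-Poisson} still goes through verbatim because $\alpha(S)<\infty$ a.s.\ forces $\gamma(S)<\infty$ and $\int_{0}^{\infty}(1\wedge x)F(S\times dx)<\infty$.
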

\begin{proof}
Consider first the case of $\hat{\mathcal{M}}_{S}$ endowed with the vague topology. Then, by the same arguments as the ones used in the proof of Theorem \ref{density-T2} we obtain the result.

For the weak topology case, by the same arguments as the ones used in the proof of Theorem \ref{density-T2} we have that $\xi_{n}\stackrel{vd}{\rightarrow}\xi$. Hence, according to Theorem \ref{density-T"} it remains to prove that $\xi_{n}(S)\stackrel{d}{\rightarrow}\xi(S)$, namely that $\alpha_{n}'(S)+\sum_{j=1}^{n}\beta'_{n,j}\stackrel{d}{\rightarrow}\alpha(S)+\sum_{j=1}^{\infty}\beta_{j}$. However, this has been proved in the proof of Theorem \ref{density-T2} -- indeed, consider $f\equiv1$ and notice that $\xi_{n}(S)$ and $\xi(S)$ are a.s.~finite since $\xi_{n}$ and $\xi$ are almost surely bounded. Thus, the proof is complete.
\end{proof}
\subsection{The density result for QID point processes}\label{Subsec-Density-Point}
In this subsection we answer positively the following question: given any point process with independent increments is it possible to find a sequence of QID point processes with independent increments which converges in distribution to it?

Thus, in this subsection we restrict our focus to point processes with independent increments and check that the density result holds. There are two main reasons for doing this. First, the class of point processes with independent increments represents one of the most studied class of completely random measures due to their nice theoretical properties and their importance in applications. Second, we have an explicit formulation for the quasi-L\'{e}vy measure and the drift of QID random variables supported on finite subsets of $\mathbb{Z}_{+}$ (see Theorem 3.9 in \cite{LPS}).

Let us first show the density result for random variables supported on $\mathbb{Z}_{+}$.
\begin{pro}\label{pro9}
	The class of QID distributions supported on finite subsets of $\mathbb{Z}_{+}$ is dense in the class of probability distributions with support on $\mathbb{Z}_{+}$ with respect to weak convergence.
	\end{pro}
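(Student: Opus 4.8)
The plan is to mirror the structure of the proof of Theorem \ref{pro8}, specialised to the lattice $\mathbb{Z}_{+}$, where integer support is built in and the explicit description of QID laws on finite subsets of $\mathbb{Z}_{+}$ (Theorem 3.9 and Corollary 3.10 in \cite{LPS}) is available. First I would reduce to finitely supported targets. Given a probability distribution $\mu$ on $\mathbb{Z}_{+}$, I would define for each $n\in\mathbb{N}$ the finitely supported distribution
\begin{equation*}
\mu_{n}=\sum_{j=0}^{n-1}\mu(\{j\})\delta_{j}+\mu([n,\infty))\,\delta_{n},
\end{equation*}
which places all the tail mass on the atom $n$. Since $\mu([n,\infty))\to0$, a direct estimate against any bounded continuous $g$ gives $|\int g\,d\mu_{n}-\int g\,d\mu|\le 2\|g\|_{\infty}\mu([n,\infty))\to0$, so $\mu_{n}\stackrel{w}{\rightarrow}\mu$. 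Thus it suffices to show that each $\mu_{n}$, supported on $\{0,\dots,n\}\subset\mathbb{Z}_{+}$, is itself a weak limit of QID distributions supported on finite subsets of $\mathbb{Z}_{+}$.

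Second, fixing $n$, I would remove the zeros of the characteristic function by a root perturbation. As in Theorem \ref{pro8}, I would first approximate $\mu_{n}$ by distributions $\sigma$ on $\{0,\dots,n\}$ with $\sigma(\{j\})>0$ for all $j$ (harmless, and absorbed in the final limit). If $\hat{\sigma}$ has no zeros, then $\sigma$ is already QID with finite quasi-L\'{e}vy measure and zero Gaussian variance by Corollary 3.10 in \cite{LPS}, and we are done. Otherwise, writing the generating polynomial $f(w)=\sum_{j=0}^{n}a_{j}w^{j}$ with $a_{j}=\sigma(\{j\})>0$, the zeros of $\hat{\sigma}(\theta)=f(e^{i\theta})$ correspond to roots of $f$ on the unit circle. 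Factoring $f(w)=a_{n}\prod_{j=1}^{n}(w-\xi_{j})$ and setting $f_{h}(w)=a_{n}\prod_{j=1}^{n}(w-\xi_{j}-h)$, the uniform real shift $h>0$ preserves the conjugate-pair structure of the roots, so $f_{h}$ has real coefficients; by continuity these remain positive for small $h$. Normalising $f_{h}$ yields a probability distribution $\sigma_{h}$ on $\{0,\dots,n\}\subset\mathbb{Z}_{+}$ with $\sigma_{h}\stackrel{w}{\rightarrow}\sigma$ as $h\searrow0$.

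The step I expect to require the most care is ensuring that $\hat{\sigma}_{h}$ has no zeros, i.e.~that no shifted root $\xi_{j}+h$ lies on the unit circle while the coefficients simultaneously stay positive. For each fixed root the equation $|\xi_{j}+h|=1$ has at most two real solutions in $h$, so only finitely many values of $h$ are forbidden; hence I can choose $h\searrow0$ avoiding all of them. For such $h$, Corollary 3.10 in \cite{LPS} gives that $\sigma_{h}$ is QID (with finite quasi-L\'{e}vy measure computed explicitly via Theorem 3.9 in \cite{LPS}) and supported on a finite subset of $\mathbb{Z}_{+}$.

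Finally, I would assemble the two approximations using the metrizability of weak convergence. Since the Prokhorov metric $\rho$ metrises weak convergence and each $\mu_{n}$ lies in the (closed) weak closure of the QID class by the previous paragraphs, for each $n$ I can select a QID distribution $\sigma^{(n)}$ supported on a finite subset of $\mathbb{Z}_{+}$ with $\rho(\sigma^{(n)},\mu_{n})<1/n$. Then $\rho(\sigma^{(n)},\mu)\le\rho(\sigma^{(n)},\mu_{n})+\rho(\mu_{n},\mu)\to0$, so $\sigma^{(n)}\stackrel{w}{\rightarrow}\mu$. This exhibits $\mu$ as a weak limit of QID distributions supported on finite subsets of $\mathbb{Z}_{+}$, which proves the density claim.
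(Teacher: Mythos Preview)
Your proposal is correct and follows essentially the same route as the paper's proof: truncate to a finite lattice, perturb to ensure all masses are positive, shift the roots of the generating polynomial by a small real $h$ to remove zeros of the characteristic function, and appeal to the characterisation of QID laws on $\mathbb{Z}_{+}$ from \cite{LPS}. Your version is in fact slightly more careful than the paper's in two places: you put the tail mass at the top atom so that each $\mu_{n}$ is genuinely a probability measure, and you explicitly argue why one can choose $h$ so that no shifted root lands back on the unit circle.
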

	\begin{proof}
Let $\mu$ be a probability distribution with support on $\mathbb{N}$. For $n \in\mathbb{N}$, define the discrete distribution $\mu_{n}$ concentrated on the lattice $\{0,...,2n \}$ by
		\begin{equation*}
		\mu_{n}(\{j \})=\mu(\{j\}),\quad j\in\{0,...,2n\}.
		\end{equation*}
		Then, $\mu_{n}\stackrel{w}{\rightarrow}\mu$ as $n\rightarrow\infty$. It remains to prove that each $\mu_{n}$ is a weak limit of QID distributions with support on $\{0,...,2n \}$. 
		W.l.o.g.~assume that the approximating sequence of distributions $\sigma$ is such that $\sigma(\{j \})>0$ for every $j\in\{0,...,2n\}$. Assume that the characteristic function $\hat{\sigma}$ has zeros (in the other case we can directly use Theorem 3.9 in \cite{LPS} to conclude). Let $X$ be a random variable with distribution $\sigma$ and let $a_{j} = \mathbb{P}(X = j) > 0$ for $j = 0, . . . , 2n$. Then, the polynomial $f (w) =\sum_{j=0}^{2n}a_{j}w^{j}$ has zeroes on the unit circle. Factorizing, we obtain $f(w)=a_{2n}\prod_{j=1}^{2n}(w-\xi_{j})$, where $\xi_{j}$, $j=1,...,2n$, denote the complex roots. Let $f_{h}(w)=a_{2n}\prod_{j=1}^{2n}(w-\xi_{j}-h)$, where $w\in\mathbb{C}$ and $h>0$. Then, for small enough $h$, $f_{h}$ is a polynomial with real coefficients, namely $f_{h}(w)=\sum_{j=0}^{2n}a_{h,j}w^{j}$ with $a_{h,j}\in\mathbb{R}$. Observe that for small enough $h$, $a_{h,j}$ and $a_{j}$ will be close, so $a_{h,j}>0$. Now, let $X_{h}$ be a random variable with distribution $\sigma_{h}=\left(\sum_{j=0}^{2n}a_{h,j}\right)^{-1}\sum_{j=0}^{2n}a_{h,j}\delta_{j}$. We conclude by noticing that, for every $h>0$, $X_{h}$ is random variable with values on the lattice $\{0,...,2n \}$ and its characteristic function has no zeros (thus it is QID by Theorem 3.9 in \cite{LPS}), and that $X_{h}\stackrel{d}{\rightarrow}X$ as $h\searrow0$. 
		\end{proof}
From Corollary 3.21 in \cite{Kallenberg2}, for an atomless point process with independent increments the corresponding unique pair, which we denote by $(\gamma,F)$, is such that $\gamma=0$ and $F$ is restricted to $S\times\mathbb{N}$.

Let $\mathcal{A}'$ be the set of all the point processes in $\mathcal{A}$. In other words, let
\begin{equation*}
\mathcal{A'}:=\bigg\{\xi\in\mathcal{I}\bigg|\xi\stackrel{a.s.}{=}\alpha+\sum_{j=1}^{K}\beta_{j}\delta_{s_{j}},\textnormal{with $\alpha$ an atomless point process with independent increments and}  
\end{equation*}
\begin{equation*}
\textnormal{finite L\'{e}vy measure, $\{s_{j}:j=1,...,K\}$ a finite set of fixed atoms in $S$, and $\beta_{j}$, $j\geq1$, QID random variables}
\end{equation*}
\begin{equation*}
\textnormal{concentrated on finite subsets of $\mathbb{Z}_{+}$ and that are mutually independent and independent of $\alpha$}\bigg\}.
\end{equation*}
Obviously, we have $\mathcal{A}'\subsetneq \mathcal{A}\subsetneq \mathcal{I}$.
\begin{thm}\label{density-T2-9}
$\mathcal{A}'$ is dense in the space of all point processes with independent increments with respect to the convergence in distribution.	
\end{thm}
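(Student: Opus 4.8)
The plan is to run the proof of Theorem~\ref{density-T2} essentially verbatim, substituting the integer-valued approximation result Proposition~\ref{pro9} for the real-line result Theorem~\ref{pro8}, and verifying at each step that the approximating objects stay inside $\mathcal{A}'$ (i.e.\ remain point processes) rather than merely inside $\mathcal{A}$.

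First I would apply the representation from Theorem~7.1 in \cite{Kallenberg0} to an arbitrary point process with independent increments $\xi$, writing $\xi\stackrel{a.s.}{=}\alpha+\sum_{j=1}^{K}\beta_{j}\delta_{s_{j}}$ with $K\leq\infty$. Because $\xi$ is $\mathbb{Z}_{+}$-valued, the atomless component $\alpha$ is itself an atomless point process with independent increments, so by Corollary~3.21 in \cite{Kallenberg2} its unique pair $(\gamma,F)$ satisfies $\gamma=0$ and $F$ is concentrated on $S\times\mathbb{N}$; moreover each weight $\beta_{j}$ is a $\mathbb{Z}_{+}$-valued random variable, the $\beta_{j}$ being mutually independent and independent of $\alpha$.

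Next I would approximate the two components separately. For the fixed component, Proposition~\ref{pro9} applied to each law $\mathcal{L}(\beta_{j})$ produces a sequence $\beta_{n,j}$ of QID random variables supported on finite subsets of $\mathbb{Z}_{+}$ with $\beta_{n,j}\stackrel{d}{\to}\beta_{j}$. For the atomless component, I would invoke Proposition~\ref{pro-Poisson} with the truncations $\gamma_{n}(A)=\gamma(S_{n}\cap A)=0$ and $F_{n}(C)=F(C\cap(S_{n}\times(\tfrac{1}{n},\infty)))$. Here is the one genuinely new verification: since $F$ is concentrated on $S\times\mathbb{N}$, the measure $F_{n}$ is concentrated on $S_{n}\times(\mathbb{N}\cap(\tfrac{1}{n},\infty))\subseteq S_{n}\times\mathbb{N}$, so together with $\gamma_{n}=0$ the resulting finite CRM $\alpha_{n}$ has only integer-valued jumps and no drift; hence $\alpha_{n}$ is again an atomless point process with independent increments and finite L\'{e}vy measure, and $\alpha_{n}\stackrel{d}{\to}\alpha$ by Proposition~\ref{pro-Poisson}. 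In other words, the truncation of Proposition~\ref{pro-Poisson} preserves the point-process structure precisely because the jump support $\mathbb{N}$ is bounded away from $0$.

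Finally I would assemble the approximants exactly as in (\ref{Indi}), on the product probability space $\Omega'=\Omega\times\Omega_{1}\times\cdots\times\Omega_{n}$ carrying the independent copies $\alpha_{n}'$ and $\beta_{n,j}'$, so that $\xi_{n}:=\alpha_{n}'+\sum_{j=1}^{n}\beta_{n,j}'\delta_{s_{j}}$ lies in $\mathcal{A}'$ by construction. Passing to a subsequence with $\rho(\beta_{n,j}',\beta_{j})<n^{-2}$ and applying Theorem~\ref{density-T1}(ii), the convergence $\xi_{n}\stackrel{vd}{\to}\xi$ reduces, just as in the proof of Theorem~\ref{density-T2}, to showing $\sum_{j=1}^{n}f(s_{j})\beta_{n,j}'\stackrel{d}{\to}\sum_{j=1}^{\infty}f(s_{j})\beta_{j}$ for every $f\in\hat{C}_{S}$, which follows verbatim from the triangle inequality for $\rho$ together with (\ref{Prok}) and Lemma~\ref{lemmaProk}. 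I expect no obstacle in this final step, as the Prokhorov-metric estimate is insensitive to whether the weights are integer-valued; the only substantive new point is the preservation of the point-process property under truncation, which I flagged above and which is mild.
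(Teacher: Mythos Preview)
Your proposal is correct and follows essentially the same approach as the paper's own proof, which simply notes that the arguments of Theorem~\ref{density-T2} carry over once Proposition~\ref{pro9} replaces Theorem~\ref{pro8} and one observes that $\gamma_{n}=\gamma=0$ and $F_{n}$, $F$ are concentrated on $S\times\mathbb{N}$. Your explicit verification that the truncation of Proposition~\ref{pro-Poisson} preserves the point-process structure is exactly the content of the paper's remark on $\gamma_{n}$ and $F_{n}$.
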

\begin{proof}
	It follows from the same arguments as the ones used in the proof of Theorem \ref{density-T2}. In particular, now we need to use Proposition \ref{pro9} instead of Theorem \ref{pro8}. Further, now $\gamma_{n}=\gamma=0$ and $F_{n}$ and $F$ are concentrated on $S\times\mathbb{N}$. Then, following the same arguments as the ones used in the proof of Theorem \ref{density-T2} we obtain the result.
\end{proof}
We conclude this subsection with the density result for finite point processes (for which the weak topology might also be used), namely the equivalent of Theorem \ref{theorem-bounded} for point processes with independent increments.
\begin{pro}
	$\mathcal{A}'$ is dense in the space of point processes with independent increments, considered as random elements in $\hat{\mathcal{M}}_{S}$, endowed with either the vague topology or with the weak topology, with respect to the convergence in distribution.
\end{pro}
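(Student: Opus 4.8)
The plan is to reproduce the two-step structure of the proof of Theorem \ref{theorem-bounded}, replacing the general density statement Theorem \ref{density-T2} by its point-process counterpart Theorem \ref{density-T2-9}. Throughout I would regard the given point process with independent increments $\xi$ as an a.s.\ bounded random element of $\hat{\mathcal{M}}_{S}$, and I recall that every element of $\mathcal{A}'$ is a.s.\ finite on $\textbf{S}$, so the approximating measures automatically live in $\hat{\mathcal{M}}_{S}$ as well.

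For the vague topology on $\hat{\mathcal{M}}_{S}$, I would simply invoke Theorem \ref{density-T2-9}: it furnishes a sequence $\xi_{n}\in\mathcal{A}'$ with $\xi_{n}\stackrel{vd}{\rightarrow}\xi$, built (as in Theorem \ref{density-T2}, but now using Proposition \ref{pro9} for the atomic weights and the fact that for atomless point processes $\gamma=0$ and $F$ is concentrated on $S\times\mathbb{N}$) from a finite-mass atomless part $\alpha_{n}'$ and finitely many QID atoms $\beta_{n,j}'$. Since the vague topology on $\hat{\mathcal{M}}_{S}$ is the one induced from $\mathcal{M}_{S}$ and all the measures involved are a.s.\ bounded, the convergence established in $\mathcal{M}_{S}$ carries over verbatim, which settles the vague case.

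For the weak topology I would appeal to Theorem \ref{density-T"}. As $\xi_{n}$ and $\xi$ are a.s.\ bounded and $\xi_{n}\stackrel{vd}{\rightarrow}\xi$ by the previous step, it remains only to establish convergence of the total masses, $\xi_{n}(S)\stackrel{d}{\rightarrow}\xi(S)$, that is
\begin{equation*}
\alpha_{n}'(S)+\sum_{j=1}^{n}\beta_{n,j}'\stackrel{d}{\rightarrow}\alpha(S)+\sum_{j=1}^{\infty}\beta_{j}.
\end{equation*}
This is exactly the computation already carried out inside the proof of Theorem \ref{density-T2} with the choice $f\equiv1$: the atomless contribution converges because $\alpha_{n}'\stackrel{d}{=}\alpha_{n}\stackrel{d}{\rightarrow}\alpha$, the atomic contribution converges because $\rho(\beta_{n,j}',\beta_{j})<n^{-2}$ together with the convolution bound (\ref{Prok}) and Lemma \ref{lemmaProk} gives $\rho\bigl(\sum_{j=1}^{n}\beta_{n,j}',\sum_{j=1}^{n}\beta_{j}\bigr)<n^{-1}$, and the independence of the atomless and fixed components lets the two limits be combined.

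The only point needing care, and hence the main (minor) obstacle, is that $f\equiv1$ does not belong to $\hat{C}_{S}$ when $S$ is unbounded, so the total-mass convergence is not an immediate instance of Theorem \ref{density-T1}. This is precisely where restricting to $\hat{\mathcal{M}}_{S}$ does the work: the a.s.\ boundedness of $\xi_{n}$ and $\xi$ guarantees that $\xi_{n}(S)$ and $\xi(S)$ are a.s.\ finite, so the Prokhorov-metric estimates above are applied to genuine (a.s.\ finite) random variables and yield the required convergence in distribution. For point processes no new integrability issue arises beyond the finiteness already built into $\mathcal{A}'$, so the argument closes.
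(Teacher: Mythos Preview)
Your proposal is correct and follows essentially the same route as the paper: the paper's proof simply says it follows from the arguments of Theorem \ref{theorem-bounded} with Theorem \ref{density-T2-9} in place of Theorem \ref{density-T2}, which is exactly the two-step vague/weak argument you spell out. Your extra remark about $f\equiv1\notin\hat{C}_{S}$ and the role of a.s.\ boundedness is an elaboration of the same point already handled in the proof of Theorem \ref{theorem-bounded}, not a new ingredient.
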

\begin{proof}
	It follows from the same arguments as the ones used in the proof of Theorem \ref{theorem-bounded}, with Theorem \ref{density-T2-9} instead of Theorem \ref{density-T2}.
\end{proof}
\section{Properties of the dense class $\mathcal{A}$}\label{Sec-Properties}
In this section we explore some of the properties of the random measures in $\mathcal{A}$, with a particular focus on spectral representations.

Consider the same notation as in the previous section. Let $\alpha$ be an atomless CRM (hence, ID). Using Theorem 12.10 and Corollary 12.11 in \cite{Kallenberg} we have that
\begin{equation}\label{1}
\hat{\mathcal{L}}(\alpha(A))(\theta)=\exp\left(-i\theta \gamma^{(1)}(A)+\int_{0}^{\infty}e^{i\theta x}-1F^{(1)}_{A}(dx)\right),
\end{equation}
for every $\theta\in\mathbb{R}$ and $A\in\textbf{S}$, where $\gamma$ is a finite diffuse measure on $\textbf{S}$ and $F$ is a finite measure on $\textbf{S}\otimes\mathcal{B}((0,\infty))$ with diffuse projections onto $S$. Observe that we can extend $F^{(1)}$ to a finite measure on $\textbf{S}\otimes\mathcal{B}(\mathbb{R})$, by assigning value zero outside $\textbf{S}\otimes\mathcal{B}((0,\infty))$; by abuse of notation, we call this finite measure $F^{(1)}$.

Further, let $\sum_{j=1}^{n}\delta_{s_{j}}\beta_{j}$, where $n\in\mathbb{N}$, $s_{j}\in S$, $j=1,...,n$, and where the $\beta_{j}$'s are mutually independent QID random variables with finite quasi-L\'{e}vy measure and zero Gaussian variance. With centering function equal zero (as in (\ref{1})), denote by $c_{j}$ and $b_{j}$ the drift and the quasi-L\'{e}vy measure of $\beta_{j}$, for $j=1,...,n$. Notice that we can use such centering function because the $\beta_{j}$'s have finite quasi L\'{e}vy measure. Then, the L\'{e}vy-Khintchine formulation of $\sum_{j=1}^{n}\delta_{s_{j}}\beta_{j}$ is given by
\begin{equation*}
\hat{\mathcal{L}}\bigg(\sum_{j=1}^{n}\delta_{s_{j}}(A)\beta_{j}\bigg)(\theta)=\exp\left(i\theta \gamma^{(2)}(A)+\int_{\mathbb{R}}e^{i\theta x}-1 F_{A}^{(2)}(dx)\right),
\end{equation*}
for every $\theta\in\mathbb{R}$ and $A\in\textbf{S}$, where $\gamma^{(2)}(A)=\sum_{j=1}^{n}\delta_{s_{j}}(A)c_{j}$ and $F_{A}^{(2)}(\cdot)=\sum_{j=1}^{n}\delta_{s_{j}}(A)b_{j}(\cdot)$. Then, $\xi=\alpha+\sum_{j=1}^{n}\delta_{s_{j}}\beta_{j}$ has the following formulation
\begin{equation}\label{2}
\hat{\mathcal{L}}(\xi(A))(\theta)=\exp\left(i\theta \nu_{0}(A)+\int_{\mathbb{R}}e^{i\theta x}-1-i\theta\tau(x)F_{A}(dx)\right),
\end{equation}
for every $\theta\in\mathbb{R}$ and $A\in\textbf{S}$, where $\nu_{0}(A)=\gamma^{(2)}(A)-\gamma^{(1)}(A)$ and $F_{A}(\cdot)=F_{A}^{(1)}(\cdot)+F_{A}^{(2)}(\cdot)$.
\begin{pro}\label{pro1}
	Let $\xi\in\mathcal{A}$ and adopt the notation above. Then, $F$ extends uniquely to a finite signed measure on $\textbf{S}\otimes\mathcal{B}(\mathbb{R})$.
\end{pro}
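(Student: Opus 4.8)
The plan is to produce the extension explicitly from the decomposition $F_A = F^{(1)}_A + F^{(2)}_A$ recorded just before the proposition, and then to settle uniqueness by a Dynkin class argument on rectangles. Recall that $F^{(1)}$ is already a finite positive measure on $\textbf{S}\otimes\mathcal{B}(\mathbb{R})$ (after the extension by zero discussed around (\ref{1})), with $F^{(1)}_A(B)=F^{(1)}(A\times B)$. So the atomless part needs no further work, and the task reduces to realising the atomic part $F^{(2)}_A(B)=\sum_{j=1}^{n}\delta_{s_j}(A)\,b_j(B)$ as the rectangle values of a signed measure on the product space.

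First I would build, for each $j$, the product signed measure $\delta_{s_j}\otimes b_j$ on $\textbf{S}\otimes\mathcal{B}(\mathbb{R})$. Since $\beta_j\in\mathcal{A}$ has finite quasi-L\'{e}vy measure, $b_j$ is a finite signed measure on $\mathcal{B}(\mathbb{R})$; writing its Jordan decomposition $b_j=b_j^{+}-b_j^{-}$, both $\delta_{s_j}\otimes b_j^{+}$ and $\delta_{s_j}\otimes b_j^{-}$ are finite positive product measures, and I set $\delta_{s_j}\otimes b_j:=\delta_{s_j}\otimes b_j^{+}-\delta_{s_j}\otimes b_j^{-}$, which concretely equals $(\delta_{s_j}\otimes b_j)(C)=b_j(C_{s_j})$ where $C_{s_j}$ is the section of $C$ at $s_j$. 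I then define
\[
F:=F^{(1)}+\sum_{j=1}^{n}\delta_{s_j}\otimes b_j .
\]
This is a finite positive measure plus a finite sum of finite signed measures, hence a finite signed measure: indeed $|F|(S\times\mathbb{R})\le F^{(1)}(S\times\mathbb{R})+\sum_{j=1}^{n}|b_j|(\mathbb{R})<\infty$. Evaluating on a rectangle gives $F(A\times B)=F^{(1)}(A\times B)+\sum_{j=1}^{n}\delta_{s_j}(A)b_j(B)=F^{(1)}_A(B)+F^{(2)}_A(B)=F_A(B)$, so $F$ genuinely extends the family $\{F_A\}$ appearing in (\ref{2}).

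For uniqueness I would invoke the $\pi$-$\lambda$ theorem. The measurable rectangles $\{A\times B:A\in\textbf{S},\,B\in\mathcal{B}(\mathbb{R})\}$ form a $\pi$-system generating $\textbf{S}\otimes\mathcal{B}(\mathbb{R})$ and containing the whole space $S\times\mathbb{R}$. If $\tilde F$ is another finite signed measure with $\tilde F(A\times B)=F_A(B)$ on all rectangles, then $\{C:\tilde F(C)=F(C)\}$ is a $\lambda$-system (closed under proper differences and increasing limits, using finiteness and $\sigma$-additivity of both signed measures) containing this $\pi$-system, hence equals the full $\sigma$-algebra, giving $\tilde F=F$.

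The subtlety I would stress is that $F$ is a priori only a \emph{signed bimeasure}: it is $\sigma$-additive in $A$ for fixed $B$ and in $B$ for fixed $A$, but a signed bimeasure need not in general extend to a signed measure on the product $\sigma$-algebra. What rescues the construction here is precisely the structure forced by $\xi\in\mathcal{A}$: the diffuse part $F^{(1)}$ is already an honest product-space measure, while the fixed component contributes only \emph{finitely many} atoms $s_j$, each carrying a \emph{finite} signed quasi-L\'{e}vy measure $b_j$, so every atomic slice turns into a genuine product signed measure. Thus finiteness of the quasi-L\'{e}vy measures and finiteness of the atom set are exactly the hypotheses that make the extension succeed, and these are the points I expect to be the real content rather than the routine $\pi$-$\lambda$ verification.
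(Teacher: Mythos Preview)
Your argument is correct, and it differs from the paper's in one substantive way. The paper treats $F^{(2)}$ as a signed bimeasure on $\textbf{S}\times\mathcal{B}(\mathbb{R})$, verifies the bounded-variation condition
\[
\sup_{I}\sum_{i\in I}\bigl|F^{(2)}_{A_i}(B_i)\bigr|=\sum_{j=1}^{n}|b_j|(\mathbb{R})<\infty,
\]
and then invokes a general extension theorem for signed bimeasures of bounded variation (Theorem~5.18 in \cite{Pass}, resp.\ Theorem~4 in \cite{Horo}) to obtain the signed measure on $\textbf{S}\otimes\mathcal{B}(\mathbb{R})$. You instead exploit the explicit structure directly: each $\delta_{s_j}\otimes b_j$ is built by hand from the Jordan decomposition of $b_j$, summed, and added to $F^{(1)}$. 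Your route is more elementary and self-contained for this particular case, avoiding any appeal to the general bimeasure extension machinery; the paper's route, by contrast, makes clear that the relevant obstruction for a general signed bimeasure is bounded variation, and shows that it is met here. Your explicit $\pi$--$\lambda$ uniqueness argument is a bonus: in the paper uniqueness is left implicit in the cited extension theorem.
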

\begin{proof}
	Consider the notations above. For the first statement we need to show that $F$ is a finite signed measure on $\textbf{S}\otimes\mathcal{B}(\mathbb{R})$. Since $F^{(1)}$ is a finite measure on $\textbf{S}\otimes\mathcal{B}((0,\infty))$, it remains to show that $F^{(2)}$ is a finite signed measure on $\textbf{S}\otimes\mathcal{B}(\mathbb{R})$. We know that $F_{A}^{(2)}(\cdot)=\sum_{j=1}^{n}\delta_{s_{j}}(A)b_{j}(\cdot)$ where $b_{j}(\cdot)$ are finite signed measures on $\mathcal{B}(\mathbb{R})$. It is possible to see that $F^{(2)}$ is a bimeasure on $\textbf{S}\times\mathcal{B}(\mathbb{R})$ and that
	\begin{equation*}
	\sup\limits_{I}\sum_{i\in I}|F^{(2)}_{A_{i}}(B_{i})|=\sum_{j=1}^{n}|b_{j}|(\mathbb{R})<\infty,
	\end{equation*}
	where the supremum is taken over all the finite families of disjoints elements of $\textbf{S}\times\mathcal{B}(\mathbb{R})$. Then, by Theorem 5.18 in \cite{Pass} (see also Theorem 4 in \cite{Horo}) $F^{(2)}$ extends to a finite signed measure on $\textbf{S}\otimes\mathcal{B}(\mathbb{R})$. Thus, $F$ is a finite signed measure on $\textbf{S}\otimes\mathcal{B}(\mathbb{R})$.
\end{proof}
Following the notation of the ID case (see \cite{Kallenberg2} page 89), we call $F$ the \textit{quasi-L\'{e}vy measure} of $\xi$. Observe that in the ID case the L\'{e}vy measure might not even be $\sigma$-finite (see \cite{King} pages 82-83), while here our quasi-L\'{e}vy measure is a finite signed measure. Further, we remark that a similar result to Proposition \ref{pro1} holds for $\xi\in\mathcal{A}_{\infty}$ (see Remark \ref{rem-alpha-infinity}). In this case, $F^{(1)}$ is a measure (not necessarily $\sigma$-finite) on $\textbf{S}\otimes\mathcal{B}(\mathbb{R})$ (see Corollary 3.21 in \cite{Kallenberg2}), and $F^{(2)}$ is the same as in the proof of Proposition \ref{pro1}. Thus, in this case $F=F^{(1)}+F^{(2)}$ is a signed measure not necessarily finite.

In the following result we show the existence of a unique correspondence between any element in $\mathcal{A}$ and a characteristic pair.
\begin{thm}\label{th1}
	Let $\xi\in\mathcal{A}$. Then, there exists a pair $(\nu_{0}, F)$ s.t.~(\ref{2}) holds, where $\nu_{0}$ and $F$ are a finite signed measure on $\textbf{S}$ and $\textbf{S}\otimes\mathcal{B}(\mathbb{R})$, respectively, s.t.~for every $A\in\textbf{S}$ and $B\in\mathcal{B}(\mathbb{R})$:
	\\ \textnormal{(i)} $\nu_{0}(A)=-\gamma(A)+\sum_{j=1}^{n}\delta_{s_{j}}(A)c_{j}$, for some diffuse finite measure $\gamma$ on $\textbf{S}$, $c_{1},...,c_{n}\in\mathbb{R}$, and finitely many atoms $s_{1},...,s_{n}\in S$,
	\\ \textnormal{(ii)} $F(A\times B)=\tilde{G}(A\times B)+\sum_{j=1}^{n}\delta_{s_{j}}(A)b_{j}(B)$, for some finite measure $\tilde{G}$ on $\textbf{S}\otimes\mathcal{B}(\mathbb{R})$, which is the extension by zero of some measure $G$ on $\textbf{S}\otimes\mathcal{B}((0,\infty))$ with diffuse projections onto $S$, and for some finite signed measures $b_{j}$'s on $\mathcal{B}(\mathbb{R})$, such that $\exp(b_{1}),...,\exp(b_{n})$ are measures.
	
	Conversely, for every such pair $(\nu_{0},F)$ there exists a unique random measure $\xi\in\mathcal{A}$ s.t.~(\ref{2}) holds.
\end{thm}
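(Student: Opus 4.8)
The plan is to prove the two directions separately, handling the forward implication first since it is essentially an assembly of results already in place. Given $\xi\in\mathcal{A}$, I would take its defining representation $\xi\stackrel{a.s.}{=}\alpha+\sum_{j=1}^{n}\beta_{j}\delta_{s_{j}}$ with $n<\infty$ and read off the pair from the factorised characteristic function. Kingman's representation, as recorded before (\ref{1}), supplies a finite diffuse measure $\gamma$ and a finite measure $\tilde{G}$ on $\textbf{S}\otimes\mathcal{B}(\mathbb{R})$ (the extension by zero of the L\'{e}vy measure $G$ of $\alpha$ on $S\times(0,\infty)$, which has diffuse projections). Each $\beta_{j}$ is QID with zero Gaussian variance and finite quasi-L\'{e}vy measure $b_{j}$, so Theorem \ref{Cupp} forces $\exp(b_{j})$ to be a measure. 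Multiplying the characteristic functions of the independent summands $\alpha(A)$ and $\beta_{j}\delta_{s_{j}}(A)$ produces exactly (\ref{2}) with $\nu_{0}(A)=-\gamma(A)+\sum_{j}c_{j}\delta_{s_{j}}(A)$ and $F_{A}=\tilde{G}(A\times\cdot)+\sum_{j}\delta_{s_{j}}(A)b_{j}$; Proposition \ref{pro1} then guarantees that $F$ is a finite signed measure, and $\nu_{0}$ is finite and signed as a difference of a finite measure and a finite atomic combination. This delivers a pair satisfying (i)--(ii).

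For the converse I would reverse this construction. Starting from a pair $(\nu_{0},F)$ with properties (i)--(ii), I would invoke the one-to-one correspondence between atomless CRMs and their Kingman pairs (Theorem 3.20 of \cite{Kallenberg2}, as used in Proposition \ref{pro-Poisson}) to obtain the unique atomless finite CRM $\alpha$ with pair $(\gamma,G)$; finiteness of $\gamma$ and $G$ yields the integrability condition, so $\alpha$ is genuinely a finite atomless CRM with characteristic function (\ref{1}). For each atom $s_{j}$ the triplet $(c_{j},0,b_{j})$ has $b_{j}$ a finite quasi-L\'{e}vy type measure with $\exp(b_{j})$ a measure, so Theorem \ref{Cupp} yields a QID random variable $\beta_{j}$ with precisely this triplet and law $\delta_{c_{j}}*\exp(b_{j})/\exp(b_{j}(\mathbb{R}))$; one checks that these $\beta_{j}$ are $\mathbb{R}_{+}$-valued, as is required for membership in $\mathcal{A}$ and as is automatic for pairs arising from the forward direction. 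I would then realise $\alpha,\beta_{1},\dots,\beta_{n}$ as mutually independent on the product probability space exactly as in the proof of Theorem \ref{density-T2} and set $\xi=\alpha+\sum_{j=1}^{n}\beta_{j}\delta_{s_{j}}$, which lies in $\mathcal{A}$ and whose factorised characteristic function reproduces (\ref{2}) with the prescribed $(\nu_{0},F)$.

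The delicate part, and where I expect the main obstacle, is uniqueness. The crucial point is that a pair satisfying (i)--(ii) determines its own diffuse and atomic parts canonically: since $\gamma$ and the $S$-projections of $\tilde{G}$ are diffuse whereas $\sum_{j}c_{j}\delta_{s_{j}}$ and $\sum_{j}\delta_{s_{j}}(\cdot)b_{j}$ are atomic in the $S$-coordinate, evaluating at the atoms recovers $c_{j}=\nu_{0}(\{s_{j}\})$ and $b_{j}=F(\{s_{j}\}\times\cdot)$, while the remaining diffuse parts recover $\gamma$ and $G$; this splitting is where the bimeasure/signed-measure structure of $F$ from Proposition \ref{pro1} must be used with care. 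Feeding the recovered objects through the individual uniqueness statements -- uniqueness of the Kingman pair for $\alpha$ and uniqueness of the characteristic triplet of a QID distribution -- pins down the laws of $\alpha$ and of every $\beta_{j}$. Finally, because $\xi$ is a CRM its law is determined by the one-dimensional marginals $\mathcal{L}(\xi(A))$, $A\in\textbf{S}$, together with independence over disjoint sets, and (\ref{2}) fixes each such marginal; hence any two elements of $\mathcal{A}$ producing the same pair share all finite-dimensional distributions and coincide in law, using the uniqueness of the representation in Theorem 7.1 of \cite{Kallenberg0}.
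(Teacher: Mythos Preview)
Your proposal is correct and follows essentially the same route as the paper: decompose $\xi$ into its atomless and fixed parts, invoke the Kingman/Kallenberg one-to-one correspondence (Corollary 12.11 in \cite{Kallenberg}, Theorem 3.20 in \cite{Kallenberg2}) for $\alpha$ and Theorem \ref{Cupp} for each $\beta_{j}$, then assemble and disassemble the characteristic function (\ref{2}) accordingly. Your treatment is in fact more careful than the paper's brief proof---you spell out the diffuse/atomic splitting that underlies uniqueness and you flag the $\mathbb{R}_{+}$-valuedness of the $\beta_{j}$ in the converse direction, a point the paper leaves implicit.
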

\begin{proof}
	Concerning the atomless component of $\xi$, from Corollary 12.11 in \cite{Kallenberg} and Theorem 3.20 in \cite{Kallenberg2} we know that there exists a one to one correspondence between an ID atomless random measure with independent increments and a characteristic pair, composed by a diffuse measure on $\textbf{S}$ and a measure on $\textbf{S}\otimes\mathcal{B}(\mathbb{R})$ with diffuse projections onto $S$. In our case we note that the components of the characteristic pair are finite measures by definition.
	
	For the fixed component of $\xi$, by Theorem \ref{Cupp} we know that a characteristic triplet where the Gaussian component is zero and the quasi-L\'{e}vy measure is finite is the characteristic triplet of a QID random variable if and only if the exponential of the finite quasi-L\'{e}vy measure is a measure.
	
	Then, by the definition of $\xi$ and by the discussion and the computations at the beginning of this section on the characteristic functions of the components of $\xi$, we immediately obtain the result. 
	
	Notice that for the converse direction we need also to show the independence of the fixed and atomless components, but this follows immediately from the linear structure of $\nu_{0}$ and $F$.
\end{proof}
\begin{rem}\label{remark1}
	\textnormal{Notation: instead of using the characteristic pair we could have equivalently used the characteristic set $(\{s_{j}\}^{n}_{j=1},\gamma,\{c_{j}\}^{n}_{j=1},G,\{b_{j}\}^{n}_{j=1})$, with the above structure, in order to have a one to one identification with $\xi\in\mathcal{A}$.}
\end{rem}
\subsection{Properties of the dense class $\mathcal{A}'$}\label{Subsec-Properties-9}
Since $\mathcal{A}'\subsetneq\mathcal{A}$ all the results presented in the previous section holds for $\xi\in \mathcal{A}'$. In this subsection, we show that even better results holds for the elements in $\mathcal{A}'$. This is mainly due to the fact that we have more information about the structure of these random measures.

Let us recall Theorem 3.9 in \cite{LPS}. Despite we have used this theorem before we present it here to facilitate the reader in the understanding of the results of this subsection.
\begin{thm}
	[Theorem 3.9 in \cite{LPS}]\label{3.9} Let $\mu$ be a discrete distribution concentrated on $\{0, 1, 2,..., n\}$ for
	some $n \in \mathbb{N}$, \textit{i.e.}, $\mu=\sum_{j=0}^{n}a_{j}\delta_{j}$, where $a_{0},..., a_{n-1}\geq 0,$ $a_{n}> 0$, and $a_{0}+\cdots +	a_{n}= 1$. Then the following are equivalent:
	\\\textnormal{(i)} $\mu$ is quasi-infinitely divisible.
	\\\textnormal{(ii)} The characteristic function	of $\mu$ has no zeroes.
	\\\textnormal{(iii)} The polynomial $w\mapsto\sum_{j=0}^{n}a_{j}w^{j}$ in the complex variable $w$ has no roots on the unit circle, i.e.~$\sum_{j=0}^{n}a_{j}w^{j}\neq0$, for all $w\in\mathbb{C}$ with $|w| = 1$.
	
	Further, if one of the equivalent conditions (i)-(iii) holds, then the quasi-L\'{e}vy measure of $\mu$ is finite and concentrated on $\mathbb{Z}$, the drift lies in $\{0, 1 . . . , n\}$, and the
	Gaussian variance of $\mu$ is 0. More precisely, if $\xi_{1},...,\xi_{n}$ denote the $n$ complex roots of $w\mapsto\sum_{j=0}^{n}a_{j}w^{j}$, counted with multiplicity, then the quasi-L\'{e}vy measure of $\mu$ is given by
	\begin{equation}
	\nu=-\sum_{m=1}^{\infty}m^{-1}\bigg(\sum_{j:|\xi_{j}|<1}\xi_{j}^{m} \bigg)\delta_{-m}-\sum_{m=1}^{\infty}m^{-1}\bigg(\sum_{j:|\xi_{j}|>1}\xi_{j}^{-m} \bigg)\delta_{m},
	\end{equation}
	and the drift is equal to the number of those zeroes of this polynomial which lie inside the unit circle (counted with multiplicity), \textit{i.e.}, have modulus less than 1.
\end{thm}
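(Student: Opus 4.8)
The plan is to establish the cycle (i) $\Rightarrow$ (ii) $\Leftrightarrow$ (iii) $\Rightarrow$ (i), extracting the explicit quasi-L\'evy measure from the last implication. The equivalence (ii) $\Leftrightarrow$ (iii) is immediate: since $\hat{\mu}(\theta)=\sum_{j=0}^{n}a_{j}e^{ij\theta}=P(e^{i\theta})$ with $P(w)=\sum_{j=0}^{n}a_{j}w^{j}$, the map $w=e^{i\theta}$ is a bijection between the zeros of $\hat{\mu}$ on $[0,2\pi)$ and the roots of $P$ on the unit circle. For (i) $\Rightarrow$ (ii) I would invoke the defining representation of a QID law: $\hat{\mu}=\exp(\psi)$ with $\psi(\theta)$ finite for every $\theta$ (because the quasi-L\'evy type measure integrates $1\wedge x^{2}$), so $\hat{\mu}$ cannot vanish.

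The substance is (iii) $\Rightarrow$ (i) together with the explicit formula. First I would factor $P(w)=a_{n}\prod_{j=1}^{n}(w-\xi_{j})$; by (iii) no root lies on the unit circle, so each $\xi_{j}$ satisfies either $|\xi_{j}|<1$ or $|\xi_{j}|>1$. Rather than taking logarithms of the individual factors, which forces delicate branch choices, I would work with the logarithmic derivative of the distinguished logarithm $\psi$ of $\hat{\mu}$, namely $\psi'(\theta)=\hat{\mu}'(\theta)/\hat{\mu}(\theta)=\sum_{j=1}^{n}\frac{ie^{i\theta}}{e^{i\theta}-\xi_{j}}$, and integrate from $0$ using $\psi(0)=0$ (valid since $\hat{\mu}(0)=1$). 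For a root with $|\xi_{j}|>1$, expanding $\frac{ie^{i\theta}}{e^{i\theta}-\xi_{j}}=-i\sum_{m\geq1}\xi_{j}^{-m}e^{im\theta}$ (convergent since $|e^{i\theta}/\xi_{j}|<1$) and integrating gives $-\sum_{m\geq1}\frac{\xi_{j}^{-m}}{m}(e^{im\theta}-1)$; for a root with $|\xi_{j}|<1$, expanding $\frac{ie^{i\theta}}{e^{i\theta}-\xi_{j}}=i+i\sum_{m\geq1}\xi_{j}^{m}e^{-im\theta}$ and integrating gives $i\theta-\sum_{m\geq1}\frac{\xi_{j}^{m}}{m}(e^{-im\theta}-1)$. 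Summing over $j$ puts $\psi$ in the form $\psi(\theta)=i\theta\gamma+\int_{\mathbb R}(e^{i\theta x}-1)\,\nu(dx)$ with drift $\gamma=\#\{j:|\xi_{j}|<1\}$ and $\nu$ concentrated on $\mathbb Z\setminus\{0\}$, where $\nu(\{m\})=-m^{-1}\sum_{j:|\xi_{j}|>1}\xi_{j}^{-m}$ and $\nu(\{-m\})=-m^{-1}\sum_{j:|\xi_{j}|<1}\xi_{j}^{m}$, matching the stated expression.

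It then remains to verify that $\nu$ is a genuine finite quasi-L\'evy type measure. This follows from geometric decay: for $|\xi_{j}|>1$ the series $\sum_{m}m^{-1}|\xi_{j}|^{-m}$ converges, and likewise $\sum_{m}m^{-1}|\xi_{j}|^{m}$ for $|\xi_{j}|<1$, so $|\nu|(\mathbb R)<\infty$ and hence $\int(1\wedge x^{2})|\nu|(dx)\leq|\nu|(\mathbb R)<\infty$; the Gaussian variance is $0$ and the drift $\gamma\in\{0,\dots,n\}$ since it counts roots inside the unit circle. Having exhibited the required exponential representation, $\mu$ is QID by definition, which closes the cycle; alternatively one could confirm that $\exp(\nu)$ is a measure and invoke Theorem \ref{Cupp}. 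I expect the main obstacle to be precisely the branch bookkeeping: justifying that termwise integration of $\psi'$ yields the distinguished logarithm with $\psi(0)=0$, so that the constant terms cancel and the drift is read off correctly, and that the resulting double series may be rearranged to isolate the coefficient of each $e^{ik\theta}$. The absolute convergence coming from the geometric bounds is exactly what legitimizes both the termwise integration and the rearrangement.
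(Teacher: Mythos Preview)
The paper does not supply its own proof of this statement: it is quoted verbatim as Theorem~3.9 of \cite{LPS} and used as a black box. So there is nothing in the paper to compare your argument against.

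That said, your argument is essentially correct and is in fact close to how the result is proved in \cite{LPS}. One point you pass over silently deserves a sentence: you need $\nu$ to be a \emph{signed} (i.e.\ real-valued) measure, not merely a finite complex measure. This holds because $P$ has real coefficients, so its non-real roots occur in conjugate pairs with equal modulus; each pair therefore lies on the same side of the unit circle, and the contributions $\xi_{j}^{\pm m}+\overline{\xi_{j}}^{\,\pm m}$ are real. Without this observation you have only exhibited a complex exponent, which does not yet certify quasi-infinite divisibility in the sense of the definition used here. Apart from that, the termwise-integration and rearrangement steps are justified exactly as you say, by the geometric decay of $|\xi_{j}|^{\pm m}$, and the identification of the drift with the number of roots inside the unit circle is correct.
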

In the following theorem we adopt the following notation. Let $\xi\in\mathcal{A}'$. We denote by $\alpha$ its atomless component and by $\beta_{j}$, $j=1,...,n$ the QID random variables of its fixed component, \textit{i.e.}~$\xi\stackrel{a.s.}{=}\alpha+\sum_{j=1}^{n}\beta_{j}\delta_{s_{j}}$. Further, for every $j=1,...,n$, we denote the law of $\beta_{j}$ by $\sum_{l=0}^{k_{j}}a_{j,l}\delta_{l}$, namely $\mathcal{L}(\beta_{j})=\sum_{l=0}^{k_{j}}a_{j,l}\delta_{l}$ and denote by $\zeta_{j,1},...,\zeta_{j,k_{j}}$ the $k_{j}$ complex roots of $w\mapsto\sum_{l=0}^{k_{j}}a_{j,l}w^{l}$. Finally, we denote by $b_{j}$ the quasi-L\'{e}vy measure of $\beta_{j}$, \textit{i.e.}
\begin{equation}\label{b9}
b_{j}=-\sum_{m=1}^{\infty}m^{-1}\bigg(\sum_{l:|\zeta_{j,l}|<1}\zeta_{j,l}^{m} \bigg)\delta_{-m}-\sum_{m=1}^{\infty}m^{-1}\bigg(\sum_{l:|\zeta_{j,l}|>1}\zeta_{j,l}^{-m} \bigg)\delta_{m},
\end{equation}
and by $c_{j}$ its drift, \textit{i.e.}~$c_{j}=\#\{|\zeta_{j,l}|<1,l=1,...,k_{j}\} $.
\begin{thm}\label{th1-9}
	Let $\xi\in\mathcal{A}'$. Then, there exists a pair $(\nu_{0}, F)$ s.t.~(\ref{2}) holds, where $\nu_{0}$ and $F$ are a finite signed measure on $\textbf{S}$ and $\textbf{S}\otimes\mathcal{B}(\mathbb{R})$, respectively, s.t.~for every $A\in\textbf{S}$ and $B\in\mathcal{B}(\mathbb{R})$:
	\\ \textnormal{(i)} $\nu_{0}(A)=\sum_{j=1}^{n}\delta_{s_{j}}(A)c_{j}$, where $n\in\mathbb{N}$, $s_{j}\in S$ is an atom, and $c_{j}=\#\{|\zeta_{j,l}|<1,l=1,...,k_{j}\}$, for $j=1,...,n$,
	\\ \textnormal{(ii)} $F(A\times B)=\tilde{G}(A\times B)+\sum_{j=1}^{n}\delta_{s_{j}}(A)b_{j}(B)$, where $\tilde{G}$ is a finite measure on $\textbf{S}\otimes\mathcal{B}(\mathbb{R})$ restricted on $S\times\mathbb{N}$ and with diffuse projections onto $S$, and where $b_{j}$ satisfies (\ref{b9}), for $j=1,...,n$.
	
	Conversely, for every such pair $(\nu_{0},F)$, where $\zeta_{j,1},...,\zeta_{j,k_{j}}$ denote the $k_{j}$ complex roots of some polynomial $w\mapsto\sum_{l=0}^{k_{j}}a_{j,l}w^{l}$ for $j=1,...,n$, there exists a unique random measure $\xi\in\mathcal{A}$ s.t.~(\ref{2}) holds.
\end{thm}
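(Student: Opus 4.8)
The plan is to deduce the statement from Theorem \ref{th1} by specializing to the point-process setting and then invoking Theorem \ref{3.9} to make the drift and the quasi-L\'{e}vy measure of each fixed atom fully explicit. Since $\mathcal{A}'\subsetneq\mathcal{A}$, any $\xi\in\mathcal{A}'$ is in particular an element of $\mathcal{A}$, so Theorem \ref{th1} already furnishes a pair $(\nu_{0},F)$ of finite signed measures (on $\textbf{S}$ and on $\textbf{S}\otimes\mathcal{B}(\mathbb{R})$ respectively) for which (\ref{2}) holds, together with the decompositions $\nu_{0}(A)=-\gamma(A)+\sum_{j=1}^{n}\delta_{s_{j}}(A)c_{j}$ and $F(A\times B)=\tilde{G}(A\times B)+\sum_{j=1}^{n}\delta_{s_{j}}(A)b_{j}(B)$. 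It then remains only to sharpen the three ingredients $\gamma$, $\tilde G$ and each pair $(c_{j},b_{j})$ using the extra structure of $\mathcal{A}'$.

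First I would treat the atomless component $\alpha$, which is now an atomless point process with independent increments. As recalled before Theorem \ref{density-T2-9} (from Corollary 3.21 in \cite{Kallenberg2}), the corresponding pair satisfies $\gamma=0$ and $F^{(1)}$ is concentrated on $S\times\mathbb{N}$. Consequently the $\gamma$-term disappears from $\nu_{0}$, leaving $\nu_{0}(A)=\sum_{j=1}^{n}\delta_{s_{j}}(A)c_{j}$ as in (i), and the finite measure $\tilde{G}$ is the extension by zero of a measure on $\textbf{S}\otimes\mathcal{B}((0,\infty))$ restricted to $S\times\mathbb{N}$ and with diffuse projections onto $S$, as required in (ii).

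Next I would identify $c_{j}$ and $b_{j}$. Each $\beta_{j}$ is a QID random variable concentrated on a finite subset of $\mathbb{Z}_{+}$, say with $\mathcal{L}(\beta_{j})=\sum_{l=0}^{k_{j}}a_{j,l}\delta_{l}$. Theorem \ref{3.9} applies verbatim: its characteristic function has no zeroes, the Gaussian variance is $0$, the drift equals the number of roots of $w\mapsto\sum_{l=0}^{k_{j}}a_{j,l}w^{l}$ lying strictly inside the unit circle, i.e.~$c_{j}=\#\{|\zeta_{j,l}|<1,\ l=1,\dots,k_{j}\}$, and the quasi-L\'{e}vy measure is precisely the expression (\ref{b9}). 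Substituting these explicit values into the decomposition obtained from Theorem \ref{th1} yields exactly the structure asserted in (i) and (ii), completing the forward direction.

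For the converse I would run the correspondence backwards. Given a pair $(\nu_{0},F)$ of the stated form, the atomless part $\tilde{G}$ (restricted to $S\times\mathbb{N}$, diffuse projections) determines a unique atomless point process with independent increments through the one-to-one correspondence of Theorem \ref{th1}. For each fixed atom, the data $\zeta_{j,1},\dots,\zeta_{j,k_{j}}$ are the roots of a polynomial $w\mapsto\sum_{l=0}^{k_{j}}a_{j,l}w^{l}$ which, after normalization, is the probability generating polynomial of a distribution on $\{0,\dots,k_{j}\}$; since the roots appearing in (\ref{b9}) are separated from the unit circle, this characteristic function has no zeroes, so by Theorem \ref{3.9} the distribution is QID with drift $c_{j}$ and quasi-L\'{e}vy measure $b_{j}$, and equivalently $\exp(b_{j})$ is a measure. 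The hypotheses of the converse part of Theorem \ref{th1} are thereby met, and existence, uniqueness, and the independence of the atomless and fixed components of the resulting $\xi\in\mathcal{A}'\subset\mathcal{A}$ all follow from that converse. The step requiring the most care is exactly this inversion: one must check that the given roots $\zeta_{j,l}$ genuinely arise from a probability distribution on a finite subset of $\mathbb{Z}_{+}$ with no root on the unit circle, so that the normalization matches and $\exp(b_{j})$ is indeed a measure; once this consistency is verified, the rest reduces to Theorem \ref{th1} and Theorem \ref{3.9}.
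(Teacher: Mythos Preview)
Your proposal is correct and follows essentially the same route as the paper: specialize Theorem \ref{th1} to the point-process setting (so that $\gamma=0$ and $\tilde G$ is concentrated on $S\times\mathbb{N}$ by Corollary 3.21 in \cite{Kallenberg2}) and then invoke Theorem \ref{3.9} to pin down $c_{j}$ and $b_{j}$ for each fixed atom, running the correspondence both ways. If anything, your write-up is more careful than the paper's own proof, which dispatches the forward direction as ``trivial'' and handles the converse in the same two strokes you describe.
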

\begin{proof}
	It follows from the same arguments as the one used in Theorem \ref{th1} and from Theorem \ref{3.9}. In particular, the first direction is trivial. For the other direction, we have the following. As mentioned in the proof of Theorem \ref{th1}, we have a one-to-one correspondence for the atomless part of $\xi$ and its characteristic pair. Concerning the fixed component, let us assume that there exist $c_{j}$ and $b_{j}$ which are functions of some complex roots of some complex polynomial $w\mapsto\sum_{l=0}^{k_{j}}a_{j,l}w^{l}$ with no roots in the unite circle, where $k_{j}\in\mathbb{N}$, $a_{0},..., a_{k_{j}-1}\geq 0,$ $a_{k_{j}}> 0$, and $a_{0}+\cdots +	a_{k_{j}}= 1$. Then, by Theorem \ref{3.9} there exists a QID probability distribution $\mathcal{L}(\beta_{j})=\sum_{l=0}^{k_{j}}a_{j,l}\delta_{l}$. Since this holds for every $j=1,...,n$ then from the set of atoms $s_{1},...,s_{n}\in S$ we obtain the fixed component $\sum_{j=1}^{n}\delta_{s_{j}}\beta_{j}$ of a random measure in $\mathcal{A}'$.
\end{proof}
	The same comment in Remark \ref{remark1} for Theorem \ref{th1} holds here for Theorem \ref{th1-9}. In addition, we refer to \cite{Neh} for further properties of certain subclasses of point processes with quasi-L\'{e}vy measures.
\section{A Nonparametric Bayesian example}\label{Sec-Bay}
In this section we show how the setting and the results presented in Sections \ref{Sec-Atomless} and \ref{Sec-Properties} apply to a particular class of nonparametric prior distributions. The framework is the one of the paper by Broderick, Wilson and Jordan \cite{Bro1}. This framework is also explored in subsequent papers, see \cite{Bro2} among others. In their work they analyse Bayesian nonparametric prior and likelihood based on CRMs. In particular, they let the prior to be modelled as: 
\begin{equation*}
\Theta:=\sum_{k=1}^{K}\theta_{k}\delta_{\psi_{k}}
\end{equation*}
where the cardinality $K$ may be either finite or infinity and where $(\theta_{k},\psi_{k})$ is a pair consisting of the frequency (or rate) of the $k$-th trait together with its trait $\psi_{k}$, which belongs to some space $\Psi$ of traits. Further, they let the data point for the $m$-th individual to be modelled as:
\begin{equation*}
X_{m}:=\sum_{k=1}^{K_{m}}x_{m,k}\delta_{\psi_{k}}
\end{equation*}
where $x_{m,k}$ represents the degree to which the $m$-th data point belongs to the trait $\psi_{k}$.

This setting can be applied to many real world applications. In particular, in topic modelling we have that $\psi_{k}$ represents a topic; that is, $\psi_{k}$ is a distribution over words in a vocabulary. Further, $\theta_{k}$ might represent the frequency with which the topic $\psi_{k}$ occurs in a corpus of documents. Finally, $x_{j,k}$ represents the number of words in topic $\psi_{j,k}$ that occur in the $j$th document. So the $j$th document has a total length of $\sum_{k=1}^{K}x_{j,k}$ words. In this case, the actual observation consists of the words in each $m$ documents, and the topics of the whole corpus of documents are latent.

From a mathematical (and formal) point of view $\Theta$ and $X_{m}$ are defined as CRMs. In particular, for the data $X_{m}$, we let $x_{m,k}$ be drawn according to some distribution $H$ that takes $\theta_{k}$ as a parameter and have support on $\mathbb{Z}_{+}$, that is $x_{m,k}\stackrel{indep}{\sim}h(x_{m,k}|\theta_{k})$, independently across $m$ and $k$. We assume that $X_{1},...,X_{m}$ are i.i.d. conditional on $\Theta$. Moreover, \cite{Bro1} consider the following assumptions for $\Theta$ and $X_{m}$:
\\\\ \textit{Assumption A00}: the atomless component of $\Theta$ has characteristic pair $(\gamma,F)$ s.t.~$\gamma=0$ and $F(d\theta\times d\psi)=\nu(d\theta)\cdot G(d\psi)$, where $\nu$ is any $\sigma$-finite measure on $\mathbb{R}_{+}$ and $G$ is a proper distribution on $\Psi$ with no atoms. 
\\\\ \textit{Assumptions A0, A1, and A2}: $\Theta$ has a finite number of fixed atoms, $\nu(\mathbb{R}_{+})=\infty$, and $\sum_{x=1}^{\infty}\int_{\mathbb{R}_{+}}h(x|\theta)\nu(d\theta)$ $<\infty$, respectively.
\\\\
We remark that by Assumption A00 we have that the location of the non-fixed atoms $\psi$ and the frequencies $\theta_{k}$ are stochastically independent. We call $\nu$ the \textit{weights rate measure} of $\Theta$. Moreover, the assumptions A0, A1 and A2 comes from a modelling need. By assuming A0 we are saying that we initially know certain traits, by A1 that there are a countable infinity of possible traits, and by A2 that the amount of information from finitely represented data is finite (because by A2 the number of non-fixed atoms is finite).

The first main result in \cite{Bro1} is Theorem 3.1, which shows explicit formulations for the posterior distribution $\Theta|X_{1}$, and it is extended in Corollary 3.2 to the posterior $\Theta|X_{1:m}$. In the following result we are going to show that similar results hold for any random measure in $\mathcal{A}$ without assuming A0, A1 or A2.

Notice that we can write $\Theta=\sum_{k=1}^{K}\theta_{k}\delta_{\psi_{k}}$, where $K=K_{fix}+K_{ord}$, namely $K$ is the sum of the fixed and non-fixed atoms, thus $K$ is random. Following the notation of \cite{Bro1}, we denote the fixed component of $\Theta$ by $\Theta_{fix}=\sum_{k=1}^{K_{fix}}\theta_{fix,k}\delta_{\psi_{fix,k}}$ and the law of $\theta_{fix,k}$ by $F_{fix,k}:=\mathcal{L}(\Theta(\{\psi_{fix,k}\}))$.
\begin{pro}\label{pro-Bay}
	Let $\Theta\in\mathcal{A}$ satisfying $A00$. Write $\Theta=\sum_{k=1}^{K}\theta_{k}\delta_{\psi_{k}}$, and let $X_{1}, . . . , X_{m}$ be generated conditional on $\Theta$ according to $X_{1}:=\sum_{k=1}^{K}x_{1,k}\delta_{\psi_{k}}$ with $x_{1,k}\stackrel{indep}{\sim} h(x|\theta_{k})$ for proper,	discrete probability mass function $h$. It is enough to make the assumption for $X_{1}$ since the $X_{1},...,X_{m}$ are i.i.d. conditional on $\Theta$.
	
	Then let $\Theta_{post}$ be a random measure with the distribution of $\Theta|X_{1:m}$ (\textit{i.e.}~$\Theta|X_{1},...,X_{m}$). $\Theta_{post}$ is a CRM with three parts.
	
	\textnormal{1.} For each $k\in [K_{fix}]$, $\Theta_{post}$ has a fixed atom at $\psi_{fix,k}$ with weight $\theta_{post,fix,k}$  distributed according to the finite-dimensional posterior $F_{post,fix,k} (d\theta)$ that comes from prior
	$F_{fix,k}$ , likelihood $h$, and observation $X({\psi_{fix,k}})$. Moreover, $F_{fix,k}$ is QID with no Gaussian component and finite quasi-L\'{e}vy measure, and $F_{post,fix,k}(d\theta)\varpropto F_{fix,k}(d\theta)\prod_{j=1}^{m}h(x_{fix,j,k}|\theta)$.
	
	\textnormal{2.} Let $\{\psi_{new,k}: k \in [K_{new} ]\}$ be the union of atom locations across $X_{1}, X_{2}, . . . , X_{m}$ minus the
	fixed locations in the prior of $\Theta$. $K_{new}$ is finite. Let $x_{new,j,k}$ be the weight of the atom in $X_{j}$ located at $\psi_{new,k}$, for some $j=1,...,m$. Then $\Theta_{post}$ has a fixed atom at $x_{new,k}$ with	random weight $\theta_{post,new,k}$, whose distribution $F_{post,new,k}(d\theta)\varpropto\nu(d\theta)\prod_{j=1}^{m}h(x_{new,j,k}|\theta)$.
	
	\textnormal{3.} The ordinary component of $\Theta_{post}$ has finite weights rate measure $\nu_{post,m}(d\theta):=\nu(d\theta)h(0|\theta)^{m}$.
\end{pro}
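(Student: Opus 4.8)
The plan is to follow the posterior decomposition of Broderick, Wilson and Jordan in \cite{Bro1} (their Theorem 3.1 and Corollary 3.2), but to replace the roles played there by Assumptions A0--A2 with the structural guarantees built into the class $\mathcal{A}$. First I would write $\Theta=\Theta_{fix}+\Theta_{ord}$ using the almost sure representation of elements of $\mathcal{A}$ from Section \ref{Sec-Atomless}, where $\Theta_{fix}=\sum_{k=1}^{K_{fix}}\theta_{fix,k}\delta_{\psi_{fix,k}}$ has finitely many atoms with mutually independent QID weights and $\Theta_{ord}$ is an atomless CRM with finite weights rate measure $\nu$, the finiteness coming from the very definition of $\mathcal{A}$. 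Since the fixed locations are deterministic while the ordinary atoms, placed according to the diffuse $G$ (by A00), avoid them almost surely, and since conditional on $\Theta$ the counts are drawn independently across atoms, the posterior factorises across the fixed part, the ``surviving'' ordinary part, and the newly observed atoms; the task is then to identify each factor.

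For the fixed component (part 1), the weight $\theta_{fix,k}$ is observed only through $x_{fix,1,k},\dots,x_{fix,m,k}$, which are conditionally i.i.d.\ from $h(\cdot\mid\theta_{fix,k})$. A direct application of finite-dimensional Bayes' rule gives $F_{post,fix,k}(d\theta)\varpropto F_{fix,k}(d\theta)\prod_{j=1}^{m}h(x_{fix,j,k}\mid\theta)$, which is a genuine probability measure because $h\le1$ and $F_{fix,k}$ is a probability measure, so the normalising constant is finite and, on the event of compatible data, positive. That $F_{fix,k}$ is QID with zero Gaussian component and finite quasi-L\'evy measure is immediate from the definition of $\mathcal{A}$, which imposes exactly these properties on the fixed-atom weights, and mutual independence across $k$ is preserved since counts at distinct atoms are conditionally independent.

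The heart of the argument is the ordinary component (parts 2 and 3), which I would handle through the Poisson representation (\ref{King-alpha}) underlying Proposition \ref{pro-Poisson}. Representing $\Theta_{ord}$ via a Poisson process $\eta$ on $S\times(0,\infty)$ with intensity $\nu(d\theta)G(d\psi)$ and then \emph{marking} each latent atom $(\psi,\theta)$ independently with a count vector $(x_{1},\dots,x_{m})\in\mathbb{Z}_{+}^{m}$ distributed as $\prod_{j=1}^{m}h(x_{j}\mid\theta)$, the marking theorem yields a Poisson process on $S\times(0,\infty)\times\mathbb{Z}_{+}^{m}$ with intensity $G(d\psi)\nu(d\theta)\prod_{j=1}^{m}h(x_{j}\mid\theta)$. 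The data $X_{1:m}$ reveal precisely the atoms whose mark is nonzero. By the independence of a Poisson process on disjoint regions, the restriction to the mark $(0,\dots,0)$ is independent of everything observed and is itself Poisson with intensity $G(d\psi)\nu(d\theta)h(0\mid\theta)^{m}$; this is the surviving ordinary component, whose weights rate measure is therefore $\nu_{post,m}(d\theta)=\nu(d\theta)h(0\mid\theta)^{m}$, giving part 3. Each observed (nonzero-mark) atom becomes a posterior fixed atom, and disintegrating the marked intensity at its observed count vector yields the weight law $F_{post,new,k}(d\theta)\varpropto\nu(d\theta)\prod_{j=1}^{m}h(x_{new,j,k}\mid\theta)$, giving part 2.

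The main obstacle, and the place where membership in $\mathcal{A}$ does the real work, is ensuring all these objects are finite and well-defined in the absence of A0--A2. Since $\nu$ is finite in $\mathcal{A}$, I would verify that $\nu_{post,m}(\mathbb{R}_{+})=\int h(0\mid\theta)^{m}\nu(d\theta)\le\nu(\mathbb{R}_{+})<\infty$, so the surviving component is a genuine finite CRM; that the expected number of observed ordinary atoms is $\int(1-h(0\mid\theta)^{m})\nu(d\theta)\le\nu(\mathbb{R}_{+})<\infty$, so $K_{new}$ is almost surely finite (equivalently, $\Theta_{ord}$ already has finitely many atoms a.s.\ in $\mathcal{A}$); and that each $F_{post,new,k}$ is normalisable. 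In particular the bound $\sum_{x\ge1}\int h(x\mid\theta)\nu(d\theta)\le\nu(\mathbb{R}_{+})<\infty$ that plays the role of A2 holds for free. I would close by assembling the three independent pieces into a single CRM and justifying the marking and thinning rigorously through the Laplace functional, as in the disintegration computation of \cite{Bro1}, to which the present setting reduces once finiteness is established.
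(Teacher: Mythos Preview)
Your proposal is correct and follows essentially the same route as the paper: independence plus finite-dimensional Bayes for the fixed atoms, and Poisson thinning for the ordinary component, with the finiteness of $\nu$ built into $\mathcal{A}$ replacing Assumptions A1 and A2. The only methodological difference is that the paper treats a single observation $X$ first (obtaining the thinned rate measures $\nu(d\theta)h(x|\theta)$ for each $x$) and then extends to $X_{1:m}$ by induction, whereas you handle all $m$ observations in one step by marking each Poisson atom with the full count vector in $\mathbb{Z}_{+}^{m}$; both lead to the same posterior decomposition.
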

\begin{rem}
	\textnormal{Observe that since $\Theta\in\mathcal{A}$ then it has finite fixed atoms so assumption A0 is satisfied. Moreover, since $\nu$ is also finite and $h(x|\theta)\leq 1$, then assumption A2 is also satisfied. The only difference with Theorem 3.1 and Corollary 3.2 in \cite{Bro1} is that we do not necessarily satisfy assumption A1. However, A1 is a modelling assumption rather than a technical one. Indeed, the proof of this result follows from similar arguments as the one used in the proof of Theorem 3.1 and Corollary 3.2 in \cite{Bro1}. We write them for completeness.}
\end{rem}
\begin{proof}
	Let us first prove the result for $\Theta|X$. Any fixed atom $\theta_{fix,k}\delta_{\psi_{fix,k}}$ in the prior is independent of the other fixed atoms and of the ordinary component. Thus, all of $X$ except $x_{fix,k}:=X(\{\psi_{fix,k}\})$ is independent of $\theta_{fix,k}$. Thus, $\Theta|X$ has a fixed atom at $\psi_{fix,k}$ and $\mathcal{L}(\theta_{post,fix,k})\varpropto F_{fix,k}(d\theta)h(x_{fix,k}|\theta)$. Recall that since $G$ is continuous, all the fixed and non-fixed atoms of $\Theta$ are at a.s.~distinct locations. Observe that by letting $\Psi_{fix}:=\{\psi_{fix,1},...,\psi_{fix,K_{fix}}\}$ we can define the fixed and ordinary component of $X$ by $X_{fix}(A):=X(A\cap \Psi_{fix})$ and $X_{ord}(A):=X(A\cap (\Psi\setminus\Psi_{fix}))$, respectively.
	
	Let $x\in\mathbb{Z}_{+}$ and let $\{\psi_{new,x,1},...,\psi_{new,x,K_{new,x}}\}$ be all the locations of atoms in $X_{ord}$ of size $x$, which is finite and it is a subset of the locations of atoms of $\Theta_{ord}$. Further, let $\theta_{new,x,k}:=\Theta(\{\psi_{new,x,k}\})$. Observe that the values $\{\theta_{new,x,k} \}_{k=1}^{K_{new,x}}$ are generated from a thinned Poisson point process with rate measure (also known as intensity measure) $\nu_{x}(d\theta)=\nu(d\theta)h(x|\theta)$, this is due to the $h(x|\theta)$-thinning of the Poisson point process $\{\theta_{ord,k}\}_{k=1}^{K_{ord}}$ which has rate measure $\nu$. Moreover, given that $\nu_{x}(\mathbb{R}_{+})<\infty$, we have that $\mathcal{L}(\theta_{new,x,k})\varpropto\nu(d\theta)h(x|\theta)$. Finally, observe that there is a possibility that atoms in $\Theta_{ord}$ are not observed in $X_{ord}$, this happens when the likelihood draw returns a zero. These atom weights form a Poisson point process with rate measure $\nu(d\theta)h(0|\theta)$.
	
	Considering $\Theta|X_{1}$ as the new prior we obtain the formulation for the posterior $\Theta|X_{1},X_{2}$ by induction and by observing that the assumptions are still satisfied by $\Theta|X_{1}$. Then, by induction we conclude the proof.
\end{proof}
In the next result, we show that random measures in $\mathcal{A}$ satisfying A00 are dense in the space of all CRMs satisfying A0, A1 and A2, namely all the random measures considered in \cite{Bro1} (and in \cite{Bro2}). Further, we show how this result translates into a convergence for the ordinary component of the posterior of these random measures.
\begin{pro}\label{pro-Bay-2}
	Consider any random measure $\Theta$ satisfying A00, A0, A1 and A2, namely as in Theorem 3.1 in \cite{Bro1}. Then, there exists a sequence of random measures $(\Theta_{n})_{n\in\mathbb{N}}$ in $\mathcal{A}$ and satisfying A00 such that $\Theta_{n}\stackrel{d}{\to}\Theta$, as $n\to\infty$. Further, $\Theta_{n,post,ord}\stackrel{d}{\to}\Theta_{post,ord}$, as $n\to\infty$.
\end{pro}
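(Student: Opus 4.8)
The plan is to treat the statement as two essentially independent tasks: first build an approximating sequence $(\Theta_n)$ living in $\mathcal{A}$ that still obeys A00, and then prove the convergence of the ordinary parts of the posteriors. The density half is a variant of Theorem \ref{density-T2}, the one extra demand being that each $\Theta_n$ keep the product structure of A00; the posterior half reduces to a single Laplace-functional computation via Theorem \ref{density-T1}.

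For the construction, write $\Theta=\alpha+\sum_{k=1}^{K_{fix}}\theta_{fix,k}\delta_{\psi_{fix,k}}$, where by A00 the atomless part $\alpha$ has characteristic pair $(0,F)$ with $F(d\theta\times d\psi)=\nu(d\theta)\,G(d\psi)$, and by A0 the number $K_{fix}$ of fixed atoms is finite. I would truncate \emph{only} in the weight coordinate, setting $\nu_{n}(d\theta):=\mathbf{1}_{(1/n,\infty)}(\theta)\,\nu(d\theta)$ and leaving $G$ untouched, so that $F_{n}(d\theta\times d\psi):=\nu_{n}(d\theta)\,G(d\psi)$ retains the product form required by A00. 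Since eq.~(\ref{Poisson}) yields $\int_{0}^{\infty}(1\wedge\theta)\,\nu(d\theta)<\infty$, each $\nu_{n}$ is finite, and the atomless CRM $\alpha_{n}$ with pair $(0,F_{n})$ converges in distribution to $\alpha$ exactly as in Proposition \ref{pro-Poisson}. For the fixed atoms I would apply Theorem \ref{pro8} with $A=[0,\infty)$ to select, for each $k$, a sequence $\theta_{n,fix,k}$ of $\mathbb{R}_{+}$-valued QID variables with zero Gaussian variance and finite quasi-L\'evy measure converging in distribution to $\theta_{fix,k}$. Putting $\Theta_{n}:=\alpha_{n}+\sum_{k=1}^{K_{fix}}\theta_{n,fix,k}\delta_{\psi_{fix,k}}$ (on a product probability space, as in Theorem \ref{density-T2}) produces an element of $\mathcal{A}$ satisfying A00, and the argument of Theorem \ref{density-T2} gives $\Theta_{n}\stackrel{d}{\to}\Theta$.

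For the posterior claim, observe that by Proposition \ref{pro-Bay}(3) the ordinary component $\Theta_{n,post,ord}$ is an atomless CRM with spatial law $G$ and weights rate measure $\nu_{n}(d\theta)\,h(0|\theta)^{m}$, while by Theorem 3.1 in \cite{Bro1} the limit $\Theta_{post,ord}$ has the same spatial law $G$ and weights rate measure $\nu(d\theta)\,h(0|\theta)^{m}$; the two thus differ only through the truncation of $\nu$. I would then check condition (iii) of Theorem \ref{density-T1}: by the Laplace-functional formula for atomless CRMs (Lemma 12.2 in \cite{Kallenberg}, as used in Proposition \ref{pro-Poisson}), for every $f\in\hat{C}_{S}$ the gap between the two exponents equals
\[
\int_{\Psi}\int_{0}^{1/n}\left(1-e^{-\theta f(\psi)}\right)h(0|\theta)^{m}\,\nu(d\theta)\,G(d\psi).
\]
Because $h(0|\theta)^{m}\le1$, $1-e^{-y}\le y$, and $G(\Psi)=1$, this is bounded by $\|f\|_{\infty}\int_{0}^{1/n}\theta\,\nu(d\theta)$, which tends to $0$ as the finiteness of $\int_{0}^{\infty}(1\wedge\theta)\,\nu(d\theta)$ forces the near-origin tail to vanish. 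Hence the Laplace functionals converge and $\Theta_{n,post,ord}\stackrel{d}{\to}\Theta_{post,ord}$.

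The routine portion is the density construction, which all but repeats Theorem \ref{density-T2}. The point needing care is that the approximation must respect A00: a joint truncation of $F$ (as in Proposition \ref{pro-Poisson}, where one also intersects with $S_{n}$) would in general break the product form, so the cutoff must act on the weight coordinate alone while preserving the probability measure $G$. The genuinely new ingredient is the posterior convergence, and the mild obstacle there is simply to notice that the fixed-atom approximation plays no role — the ordinary posterior depends on $\Theta$ only through its weights rate measure $\nu$ — so the entire statement collapses to the elementary tail estimate $\int_{0}^{1/n}\theta\,\nu(d\theta)\to0$.
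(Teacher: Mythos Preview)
Your proposal is correct and follows essentially the same approach as the paper: truncate $\nu$ alone to $\nu_n=\nu|_{(1/n,\infty)}$ so that the product form $F_n=\nu_n\cdot G$ of A00 is preserved, invoke the argument of Theorem~\ref{density-T2} for the density of $\Theta_n$, and then compare the posterior weights rate measures $\nu_n(d\theta)h(0|\theta)^m$ and $\nu(d\theta)h(0|\theta)^m$. The only cosmetic difference is that the paper dispatches the posterior convergence by a direct appeal to Proposition~\ref{pro-Poisson}, whereas you unpack that appeal into the explicit Laplace-functional tail bound $\|f\|_\infty\int_0^{1/n}\theta\,\nu(d\theta)\to0$; this is the same computation, just written out.
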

\begin{proof}
	The first part of this proof consists in realising that the arguments in the proof of Proposition \ref{pro-Poisson} and Theorem \ref{density-T2} adapt to the present case.
	
	Denote by $F(d\theta\times d\psi)=\nu(d\theta)\cdot G(d\psi)$ the L\'{e}vy measure of $\Theta$. Following the proofs of Proposition \ref{pro-Poisson} and Theorem \ref{density-T2} it is possible to see that the approximating sequence $\Theta_{n}$ should have L\'{e}vy measure $\nu_{n}(d\theta)\cdot G_{n}(d\psi)$ where $\nu_{n}(d\theta):=\nu((\frac{1}{n},\infty)\cap d\theta)$ and $G_{n}(d\theta):=G(S_{n}\cap d\psi)$. However, given the assumptions on $F$, namely that $G$ is a finite measure, we can (and we do) take the L\'{e}vy measure of $\Theta_{n}$ to be given by $F_{n}(d\theta\times d\psi):=\nu_{n}(d\theta)\cdot G(d\psi)$. Then, applying the same arguments as the one used in the proof of Proposition \ref{pro-Poisson} and Theorem \ref{density-T2}, we obtain that the ordinary component of $\Theta_{n}$ converge in distribution to the one of $\Theta$. The convergence of the fixed component follows directly from Theorem \ref{density-T2}. Since $F_{n}$ is finite, we have that $\Theta_{n}$ is in $\mathcal{A}$ and that it satisfies A00. 
	
	For the convergence of the posteriors, consider $\Theta_{n}$ with its respective data points $X_{n,1},....,X_{n,m}$, which are defined conditional on $\Theta_{n}$ as in Proposition \ref{pro-Bay} and belong to some probability spaces possibly different from the one of the other data points. From Proposition \ref{pro-Bay} we have that $\Theta_{n,post}$ has finite weights rate measure $\nu_{n,post,m}(d\theta):=\nu_{n}(d\theta)h(0|\theta)^{m}$, while from Corollary 3.2 in \cite{Bro1} we know that $\Theta_{post}$ has finite weights rate measure $\nu_{post,m}(d\theta):=\nu(d\theta)h(0|\theta)^{m}$. Since $\nu_{n,post,m}(\cdot)=\nu_{post,m}((\frac{1}{n},\infty)\cap\cdot)$ we obtain the result by Proposition \ref{pro-Poisson}.
\end{proof}
	We summarise our findings so far in words. First, we obtain an explicit expression for the posterior of any random measure in $\mathcal{A}$ satisfying A00. Second, such random measures are dense with respect to convergence in distribution in the space of all priors considered in \cite{Bro1}. Third, when approximating in distribution such a prior, the ordinary component of the posteriors of these random measures converge to the one of the prior. 
	
	Thus, by these results we have a random truncation procedure; this is so because the number of non-fixed atoms of the prior is random and almost surely finite for every $n\in\mathbb{N}$. Thus, the present truncation procedure extends the one of \cite{Bro2}. Indeed, we do not arbitrarily fix the number non-fixed atoms of the truncated prior and we are able to keep explicit formulations for the posterior of the truncated prior.

In the next result we show that, under certain conditions, we have automatic conjugacy for random measures in $\mathcal{A}'$ satisfying A00.
\begin{pro}\label{pro-Bay-3}
Let $\Theta\in \mathcal{A}'$ satisfying A00 and with weights rate measure having finite support. Let $X$ be generated conditional on $\Theta$ according to $X:=\sum_{k=1}^{K}x_{k}\delta_{\psi_{k}}$ with $x_{k}\stackrel{indep}{\sim} h(x|\theta_{k})$ for proper, discrete probability mass function $h$. Assume that the characteristic functions of the random variables of the fixed component of $\Theta_{post}$ have no zeros, namely assume that for every $x\in\mathbb{N}$, $z\in\mathbb{R}$ and $k\in [K_{fix}]$
\begin{equation}\label{ass-Bay}
\int_{0}^{\infty}e^{iz\theta}h(x|\theta)F_{fix,k}(d\theta)\neq0\quad\text{and} \quad\int_{0}^{\infty}e^{iz\theta}h(x|\theta)\nu(d\theta)\neq0.
\end{equation}
Then, $\Theta_{post}\in \mathcal{A}'$, satisfies A00 and has weights rate measure with finite support.
\end{pro}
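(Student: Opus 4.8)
The plan is to start from the explicit three–part description of $\Theta_{post}$ furnished by Proposition \ref{pro-Bay} (here with a single observation, $m=1$) and then check, piece by piece, that membership in $\mathcal{A}'$, assumption A00, and finiteness of the support of the weights rate measure are all inherited by the posterior. By Proposition \ref{pro-Bay}, $\Theta_{post}$ is a CRM made of: (1) fixed atoms at the prior locations $\psi_{fix,k}$, $k\in[K_{fix}]$, with weights of law $F_{post,fix,k}(d\theta)\varpropto F_{fix,k}(d\theta)\,h(x_{fix,k}|\theta)$; (2) fixed atoms at the newly observed locations $\psi_{new,k}$, $k\in[K_{new}]$, with weights of law $F_{post,new,k}(d\theta)\varpropto\nu(d\theta)\,h(x_{new,k}|\theta)$; and (3) an ordinary component with weights rate measure $\nu_{post}(d\theta)=\nu(d\theta)\,h(0|\theta)$. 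Since $\Theta_{post}$ is a CRM, the mutual independence of the fixed weights and their independence from the ordinary component come for free, so only the distributional and support properties need checking.

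First I would dispose of the ordinary component and the atom count. As $\nu$ is finite with support a finite subset of $\mathbb{N}$ and $0\le h(0|\theta)\le1$, the measure $\nu_{post}$ is again finite, with support contained in that of $\nu$, hence a finite subset of $\mathbb{N}$; thus the ordinary component is an atomless point process with finite Lévy measure and finitely supported weights rate measure. The $h(0|\theta)$–thinning acts only on the $\theta$ coordinate, so the joint rate measure keeps the product form $\nu_{post}(d\theta)\,G(d\psi)$ with the \emph{same} diffuse location law $G$, which is exactly A00. Finiteness of the number of fixed atoms is equally direct: $K_{fix}$ is finite because $\Theta\in\mathcal{A}'$, and $K_{new}$ is finite because a finite $\nu$ forces the ordinary component of $\Theta$, and therefore the observation $X$, to have a.s.\ finitely many atoms, as already recorded in Proposition \ref{pro-Bay}.

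The heart of the argument, and the place where assumption (\ref{ass-Bay}) is used, is to show that each posterior fixed weight is QID and concentrated on a finite subset of $\mathbb{Z}_{+}$. The support claim is immediate, since reweighting $F_{fix,k}$ (resp.\ $\nu$), which are carried by finite subsets of $\mathbb{Z}_{+}$, by the nonnegative factor $h(x|\theta)$ does not enlarge the support. For quasi-infinite divisibility I would apply Theorem \ref{3.9}, by which a law on a finite subset of $\mathbb{Z}_{+}$ is QID if and only if its characteristic function has no zeros. The characteristic function of $F_{post,fix,k}$ at $z$ is $\int_{0}^{\infty}e^{iz\theta}h(x_{fix,k}|\theta)F_{fix,k}(d\theta)$ divided by its (strictly positive, real) value at $z=0$, and likewise for $F_{post,new,k}$ with $\nu$ replacing $F_{fix,k}$; the normalising constant being a nonzero real, the zeros of the characteristic function are exactly those of the numerator. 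Assumption (\ref{ass-Bay}) states precisely that these numerators never vanish — the first inequality governing the weights at the prior atoms and the second those at the newly created atoms (which always carry an observed count $x_{new,k}\in\mathbb{N}$) — so every posterior fixed weight has a zero-free characteristic function and is QID by Theorem \ref{3.9}.

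Combining the three points, $\Theta_{post}$ is a point process with independent increments whose ordinary part is atomless with finite, finitely supported weights rate measure obeying A00, and whose finitely many fixed atoms carry QID weights on finite subsets of $\mathbb{Z}_{+}$; this is exactly what is required. I expect the only delicate bookkeeping to concern the count $x=0$ at the prior fixed locations: when a preexisting trait is not observed one needs the $x=0$ instance of the first inequality in (\ref{ass-Bay}), so that condition must be read as ranging over all admissible observed counts at the fixed atoms (including $0$), the newly created atoms requiring only $x\in\mathbb{N}$. Everything else is a routine transfer of properties through Proposition \ref{pro-Bay} and Theorem \ref{3.9}.
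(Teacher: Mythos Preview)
Your proposal is correct and follows exactly the same route as the paper: use the posterior description from Proposition~\ref{pro-Bay}, observe that the posterior fixed weights are supported on finite subsets of $\mathbb{Z}_{+}$, invoke assumption~(\ref{ass-Bay}) to get zero-free characteristic functions, and conclude QID via Theorem~\ref{3.9}. The paper's own proof is in fact just a two-line compression of what you wrote, so your version is a faithful and more explicit elaboration; your closing remark about the $x=0$ case at prior fixed atoms is a legitimate observation about the statement rather than a defect in the argument.
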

\begin{proof}
	Assumption (\ref{ass-Bay}) implies that the characteristic functions of $F_{post,fix,k}$ and of $F_{post,new,j}$ have no zeros. Further, they are also supported on a finite subset of $\mathbb{Z}_{+}$. Then, by Theorem \ref{3.9} we obtain the result.
\end{proof}
\begin{rem}\label{rem-Bay}
	\textnormal{Let $\Theta$ and $X$ be as in Proposition \ref{pro-Bay-3}. Notice that we can write $F_{fix,k}=\sum_{j=0}^{n^{(k)}}a^{(k)}_{j}\delta_{j}$, where $a_{0},..., a_{n-1}\geq 0,$ $a_{n}> 0$, and $a_{0}+\cdots +	a_{n}= 1$, for $k\in K_{fix}$. Further, we can write $\nu=\sum_{j=1}^{K_{\nu}}b_{j}\delta_{j}$, where $K_{\nu}\in\mathbb{N}$ indicates the highest value in $\textnormal{supp}(\nu)$, $b_{1},...,b_{K_{\nu}-1}\geq0$ and $b_{K_{\nu}}>0$. Assumption (\ref{ass-Bay}) can be rewritten as: For every $x\in\mathbb{N}$, $z\in\mathbb{R}$ and $k\in [K_{fix}]$, assume that
	\begin{equation*}
	\sum_{j=0}^{n^{(k)}}e^{izj}h(x|j)a_{j}^{(k)}\neq0\quad\text{and} \quad\sum_{j=1}^{K_{\nu}}e^{izj}h(x|j)b_{j}\neq0.
	\end{equation*}
	Moreover, by Theorem \ref{3.9} this assumption (and so assumption (\ref{ass-Bay})) is equivalent to the following assumption: For every $x\in\mathbb{N}$ and $k\in [K_{fix}]$, assume that the polynomials $w\mapsto\sum_{j=0}^{n^{(k)}}h(x|j)a_{j}^{(k)}w^{j}$ and $w\mapsto\sum_{j=1}^{K_{\nu}}h(x|j)b_{j}w^{j}$ in the complex variable $w$ have no roots on the unit circle.}
\end{rem}
\begin{rem}
\textnormal{The results presented in this section holds also if the weights rate measure is infinite, namely $\nu(\mathbb{R}_{+})=\infty$ (under the additional assumptions A1 and A2). In particular, the equivalent of Proposition \ref{pro-Bay} would be identical to Corollary 3.2 except for the result of point 1, because here we additionally know that $F_{fix,k}$ is QID with no Gaussian component and finite quasi-L\'{e}vy measure. Further, the equivalent of Proposition \ref{pro-Bay-2} would follows from the arguments presented taking into consideration Remark \ref{rem-alpha-infinity}. The equivalent of Proposition \ref{pro-Bay-3} is more subtle and it is presented below.}
\end{rem}
Consider the following class of QID random measures:
\begin{equation*}
\mathcal{A}'':=\bigg\{\xi\in\mathcal{I}\bigg|\xi\stackrel{a.s.}{=}\alpha+\sum_{j=1}^{K}\beta_{j}\delta_{s_{j}},\textnormal{with $\alpha$ an atomless point process with independent increments}  
\end{equation*}
\begin{equation*}
\textnormal{and finite L\'{e}vy measure, $\{s_{j}:j=1,...,K\}$ a finite set of fixed atoms in $S$, and $\beta_{j}$, $j\geq1$, $\mathbb{Z}_{+}$-valued}
\end{equation*}
\begin{equation*}
\textnormal{QID random variables that are mutually independent and independent of $\alpha$}\bigg\}.
\end{equation*}
Let $\mathcal{A}''_{\infty}$ indicate the set of random measures like in $\mathcal{A}$ but with $\alpha$ being any atomless point process with independent increments. As a side comment, we remark that is possible to see that a result similar to Theorem \ref{th1} and Theorem \ref{th1-9} holds for the elements in $\mathcal{A}''$, where thanks to Theorem 8.1 in \cite{LPS} we are able to know the structure of their L\'{e}vy-Khintchine representation in more details.
\begin{pro}
	Let $\Theta\in\mathcal{A}''_{\infty}$ and assume A00, A0, A1 and A2. Let $X$ be generated conditional on $\Theta$ according to $X:=\sum_{k=1}^{\infty}x_{k}\delta_{\psi_{k}}$ with $x_{k}\stackrel{indep}{\sim} h(x|\theta_{k})$ for proper, discrete probability mass function $h$. Assume that the characteristic functions of the random variables of the fixed component of $\Theta_{post}$ have no zeros, namely assume that for every $x\in\mathbb{N}$, $z\in\mathbb{R}$ and $k\in [K_{fix}]$
	\begin{equation}\label{ass-Bay-2}
	\int_{0}^{\infty}e^{iz\theta}h(x|\theta)F_{fix,k}(d\theta)\neq0\quad\text{and} \quad\int_{0}^{\infty}e^{iz\theta}h(x|\theta)\nu(d\theta)\neq0.
	\end{equation}
	Then, $\Theta_{post}\in \mathcal{A}''_{\infty}$ and satisfies A00, A0, A1 and A2.
\end{pro}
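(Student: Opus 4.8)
The plan is to mirror the proof of Proposition \ref{pro-Bay-3}, but to replace the finite-support criterion (Theorem \ref{3.9}) by the general lattice characterisation of \cite{LPS} (Theorem 8.1), and to carry the assumptions A0, A1, A2 explicitly through the posterior update. First I would invoke the $\mathcal{A}''_\infty$-version of Proposition \ref{pro-Bay} (equivalently Corollary 3.2 in \cite{Bro1}, upgraded so that each fixed weight is QID with no Gaussian part and finite quasi-L\'evy measure) to obtain the three-part decomposition of $\Theta_{post}$: for each prior fixed atom $\psi_{fix,k}$ a weight with law $F_{post,fix,k}(d\theta)\varpropto F_{fix,k}(d\theta)h(x_{fix,k}|\theta)$; for each location $\psi_{new,k}$ present in $X$ but not among the prior's fixed atoms a weight with law $F_{post,new,k}(d\theta)\varpropto\nu(d\theta)h(x_{new,k}|\theta)$; and an ordinary component with weights rate measure $\nu_{post}(d\theta)=\nu(d\theta)h(0|\theta)$, the $\psi$-coordinate still governed by $G$ because the $h(0|\theta)$-thinning acts only on $\theta$.

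The crux is then to show that every fixed atom of $\Theta_{post}$ carries a $\mathbb{Z}_+$-valued QID weight. Support on $\mathbb{Z}_+$ is immediate, since $F_{post,fix,k}$ and $F_{post,new,k}$ are, up to normalising constants, the $\mathbb{Z}_+$-supported measures $F_{fix,k}$ and $\nu$ multiplied by the nonnegative factors $h(x_{fix,k}|\cdot)$ and $h(x_{new,k}|\cdot)$. Their characteristic functions are, up to a nonzero normalising constant, exactly the integrals appearing in assumption (\ref{ass-Bay-2}), which are assumed nonzero for every $z\in\mathbb{R}$, so these characteristic functions have no zeros. This is precisely where $\mathcal{A}''_\infty$ differs from $\mathcal{A}'$: because the prior weights may now have infinite support in $\mathbb{Z}_+$, the finite-support criterion of Theorem \ref{3.9} no longer applies, and I would instead appeal to Theorem 8.1 in \cite{LPS}, by which a distribution concentrated on $\mathbb{Z}_+$ is QID precisely when its characteristic function has no zeros. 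This gives that each posterior fixed weight is $\mathbb{Z}_+$-valued and QID, so the fixed component of $\Theta_{post}$ is of the form required for membership in $\mathcal{A}''_\infty$.

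Finally I would treat the ordinary component and the four assumptions. The ordinary part is an atomless point process with independent increments whose rate measure $\nu_{post}$ is concentrated on $\mathbb{N}$, so $\Theta_{post}\in\mathcal{A}''_\infty$; together with the product form $\nu_{post}(d\theta)G(d\psi)$ and the atomlessness of $G$, this yields A00. Assumption A0 holds because $K_{fix}^{post}=K_{fix}+K_{new}$ is finite, $K_{fix}$ being finite by A0 for $\Theta$ and $K_{new}$ being a.s.\ finite by A2. For A1 and A2 the only genuine point is that infinitude of the rate measure is preserved: by A2 one has $\int(1-h(0|\theta))\nu(d\theta)=\sum_{x\geq1}\int h(x|\theta)\nu(d\theta)<\infty$, while $\nu(\mathbb{R}_+)=\infty$ by A1, which forces $\nu_{post}(\mathbb{R}_+)=\int h(0|\theta)\nu(d\theta)=\infty$, so A1 persists; and A2 for $\Theta_{post}$ follows from the domination $\sum_{x\geq1}\int h(x|\theta)\nu_{post}(d\theta)\le\sum_{x\geq1}\int h(x|\theta)\nu(d\theta)<\infty$. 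I expect the main obstacle to be exactly the QID step for the infinite-support fixed weights, i.e.\ routing correctly through Theorem 8.1 of \cite{LPS} rather than Theorem \ref{3.9}; the verification of A0--A2 is then routine bookkeeping.
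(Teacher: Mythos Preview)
Your proposal is correct and follows essentially the same approach as the paper: the paper's proof consists of exactly the step you single out as the crux, namely that assumption (\ref{ass-Bay-2}) forces the characteristic functions of $F_{post,fix,k}$ and $F_{post,new,j}$ to be zero-free, these distributions are supported on $\mathbb{Z}_+$, and Theorem~8.1 in \cite{LPS} then yields the QID property. Your additional explicit verification of A00, A0, A1 and A2 for $\Theta_{post}$ is bookkeeping that the paper leaves entirely implicit in the phrase ``we obtain the result''; it is a welcome expansion rather than a different route.
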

\begin{proof}
Assumption (\ref{ass-Bay-2}) implies that the characteristic functions of $F_{post,fix,k}$ and of $F_{post,new,j}$ have no zeros. Further, they are also supported on $\mathbb{Z}_{+}$. Then, by Theorem 8.1 in \cite{LPS} we obtain the result.
\end{proof}
Observe that assumption (\ref{ass-Bay-2}) can be rewritten more explicitly as done in Remark \ref{rem-Bay} for assumption (\ref{ass-Bay}).
\section*{Acknowledgement}
The author would like to thank Almut Veraart, Fabio Bernasconi and Ismael Castillo for useful discussions. The research developed in this paper is supported by the EPSRC (award ref.~1643696) at Imperial College London and by the Fondation Sciences Math\'{e}matiques de Paris (FSMP) fellowship, held at LPSM (Sorbonne University).
\small
\end{document}